\newtheorem{theorem}{Theorem}[section]
\newtheorem{lemma}[theorem]{Lemma}
\theoremstyle{definition}
\newtheorem{definition}[theorem]{Definition}
\newtheorem{example}[theorem]{Example}
\newtheorem{proposition}[theorem]{Proposition}
\newtheorem{corollary}[theorem]{Corollary}
\theoremstyle{remark}
\newtheorem{remark}[theorem]{Remark}
\numberwithin{equation}{section}
\begin{document}

\title{  $(u,v)$-absorbing primary hyperideals in   multiplicative hyperrings }

\author{Mahdi Anbarloei}
\address{Department of Mathematics, Faculty of Sciences,
Imam Khomeini International University, Qazvin, Iran.
}

\email{m.anbarloei@sci.ikiu.ac.ir}


\subjclass[2020]{  20N20, 16Y20  }


\keywords{  Primary hyperideal, Maximal hyperideal, $(u,v)$-absorbing primary hyperideal.}

\begin{abstract}
The present paper addresses the notion of $(u,v)$-absorbing  primary hyperideals in commutative multiplicative hperrings. 

\end{abstract}
\maketitle

\section{Introduction}
The notions of prime and primary ideals take an important place  in ring theory. Many papers have been written on these concepts and their extensions. A recent paper \cite{Khali} is devoted to the study of notion of $(u,v)$-absorbing  primary ideals. A proper ideal $I$ of a commutative ring $R$ refers to  a $(u,v)$-absorbing  primary ideal if    $x_1  \cdots  x_u \in  P$  where $x_1,\ldots,x_u$ are nonunit elements in $R$, then  $x_1  \cdots  x_v \in I$ or $x_{v+1}  \cdots  x_u \in rad(I)$.

The idea of algebraic hyperstructures as a well established branch of classical algebraic theory goes back to
Marty's research work \cite{marty} presented  at the $8^{th}$ Congress of Scandinavian Mathematicians in 1934. So far, many mathematicians has studied on hyperstructures  (\cite{f1}, \cite{f2}, \cite{f4},\cite{f5},\cite{x2},\cite{x1},\cite{f11},\cite{f7},\cite{f17},\cite{f8},\cite{f9}). The hyerrings as a  class of algebraic hyperstructures were introduced and studied by many authors. In 1983, multiplicative hyperrings as  significant class of hyperrings were presented by Rota \cite{f14}. In these hyperrings, the addition is an operation and the multiplication is a hyperoperation.  This type of hyperstructurs  has been widely studied and investigated in \cite{ameri}, \cite{ameri5}, \cite{ameri6},  \cite{Ciampi},  \cite{Kamali} and \cite{f16}. More exactly, a  hyperoperation $``\circ" $ on a non-empty set $A$ is a mapping from $A \times A$ into  the family of all non-empty subsets of $A$ denoted by$P^*(A)$.  If $``\circ" $ is a hyperoperation on  $A$, then we say that $(A,\circ)$  is a  hypergroupoid \cite{f10}.  For any given element $a \in A$ and subsets  $A_1$ and $A_2$ of $A$,  $A_1 \circ A_2 =\cup_{a_1 \in A_1, a_2 \in A_2}a_1 \circ a_2,$ and $ A_1 \circ a=A_1 \circ \{a\}.$  The  hypergroupoid $(A, \circ)$ is   a semihypergroup if $\circ$ is associative that is $\cup_{a \in y \circ z}x \circ a=\cup_{b \in x \circ y} b \circ z $ for any $x,y,z \in A$. The semihypergroup $A$ is  a hypergroup if  $A\circ a=a \circ A=A$ for any  $a \in A$ \cite{f10}. Let $(A,\circ)$ be a semihypergroup. $\varnothing \neq B \subseteq A$ is a subhypergroup if   $B \circ a=a \circ B=B$ for each $a \in B$ \cite{f10}.    If  {\bf 1.} $(A,+)$ is a commutative group,  {\bf 2.} $(A,\circ)$ is a semihypergroup; 
{\bf 3.} $(-x)\circ y =x\circ (-y) =  -(x\circ y)$ for any $x, y \in A$,
{\bf 4.}  $(y+z)\circ x \subseteq y\circ x+z\circ x$ and  $x\circ (y+z) \subseteq x\circ y+x\circ z$ for any $x, y, z \in A, $
{\bf 5.} $x \circ y =y \circ x$ for any $x,y \in A$, then the triple  $(A,+,\circ)$ refers to a  commutative multiplicative hyperring  \cite{f10}. The multiplicative hyperring $A$ is  strongly distributive if in (4), the equality holds.

 For each subset $\Phi \in P^\star(\mathbb{Z})$   where $(\mathbb{Z},+,\cdot)$ is the ring of integers  and $\vert \Phi \vert \geq 2$, there exists a multiplicative hyperring $(\mathbb{Z}_\Phi,+,\circ)$ such that  $\mathbb{Z}_{\Phi}=\mathbb{Z}$ and  $a \circ b =\{a.x.b\ \vert \ x \in \Phi\}$ for all $a,b\in \mathbb{Z}_\Phi$ \cite{das}.
An element $e$ in $ A$ is considered as an identity element if $a \in e\circ a$ for every  $a \in A$ \cite{ameri}. An element $x$ in $ A$ is called unit, if there exists $y $ in $ A$ such that $e \in y \circ x$.  We denote the set of all unit elements in $A$ by $U(A)$  \cite{ameri}. Furthermore, a hyperring $A$ is a hyperfield if each non-zero element in $A$ is unit. $\varnothing \neq  B \subseteq A$ refers to  a  hyperideal  if {\bf 1.}  $x - y \in B$ for any $x, y \in B$,  {\bf 2.} $r \circ x \subseteq B$ for any $x \in B$ and $r \in A$ \cite{f10}.
 A proper hyperideal $B$ of   $A$ is a prime hyperideal if whenever $x,y \in A$ and $x \circ y \subseteq B$, then $x \in B$ or $y \in B$ \cite{das}.
  For any given hyperideal $B$ of $A$, the prime radical of $B$,  denoted by $rad (B)$, is the intersection of all prime hyperideals of $A$ containing  $B$. If the multiplicative hyperring $A$  has no prime hyperideal containing $B$, then we  define $rad(B)=A$ \cite{das}. We say that a proper hyperideal $B$ of   $A$ is a primary hyperideal if whenever $x,y \in A$ and $x \circ y \subseteq B$, then $x \in B$ or $y \in rad(B)$ \cite{das}. Let us write $a_1 \circ a_2 \circ \cdots \circ a_n$ as $\bigcirc_{i=1}^n a_i$. A hyperideal $B$ of $A$ is said to be a $\mathcal{C}$-hyperideal  if $\bigcirc_{i=1}^n a_i \cap B \neq \varnothing $ for $a_i \in A$ and $n \in \mathbb{N}$ imply $\bigcirc_{i=1}^n a_i \subseteq B$. Notice that in a multiplicative hyperring $A$, we have
 $\{a \in A \ \vert \  a^n \subseteq B \ \text{for some} \ n \in \mathbb{N}\} \subseteq rad(A)$. Proposition 3.2 in \cite{das} shows that the equality holds if $B$ is a $\mathcal{C}$-hyperideal of $A$. Furthermore, a hyperideal $B$ of $A$ is  a strong $\mathcal{C}$-hyperideal if $\sum_{i=1}^n(\bigcirc_{j=1}^k x_{ij}) \cap B \neq \varnothing$ for $x_{ij} \in A$ and  $k_i, n \in  \mathbb{N}$, then  $\sum_{i=1}^n(\bigcirc_{j=1}^k x_{ij}) \subseteq  B$. For more details you can see \cite{phd}. 
  A proper hyperideal $B$ in $A$ is maximal  if for
any hyperideal $M$ of $A$ with $B \subset M \subseteq A$, then $M = A$ \cite{ameri}. We denote the intersection of all maximal hyperideals of $A$  by $J(A)$. Also, the multiplicative hyperring $A$ is local  if it has just one maximal hyperideal \cite{ameri}.   For any give hyperideals  $B_1$ and $B_2$   of $A$, we define $(B_2:B_1)=\{a \in A \ \vert \ a \circ B_1 \subseteq B_2\}$ \cite{ameri}. A multiplicative hyperring  $A$ with identity $e$ is  hyperdomain if $0 \in x \circ y$ for  $x,y \in A$, then $x=0$ or $y=0$.

The concepts of prime and primary hyperideals have been extended to more nuanced concepts (\cite{Ay}, \cite{far}, \cite{Ghiasvand}, \cite{Ghiasvand2}, \cite{Mena}, \cite{Sen}, \cite{ul}). These expansions have paved the way for further discovery into more complex types of hyperideals. Despite these developments, there is a need to explore other classes of hyperideals for deeper understanding of commutative multiplicatve hyperrings structure.

In this paper, we aim to introduce the notion $(u,v)$-absorbing primary hyperideals  where $u,v \in \mathbb{Z}$ with $u>v$. Indeed,  this definition presents a distinct viewpoint by including the radical of  hyperideals   in the definition of  $(u,v)$-absorbing prime hyperideals proposed in \cite{anb8}.
 Among many results in this paper, we give an example of a $(u,v)$-absorbing primary hyperideal that is not a $(u,v)$-absorbing prime hyperideal (Example \ref{er1}). Although every $(u,v)$-absorbing primary  hyperideal of $A$ is a $(w,v)$-absorbing primary hyperideal of $A$ for any $w \geq u$, Example \ref{er2} shows that the converse may fail. In Theorem \ref{1}, we conclude that the radical of a $(u,v)$-absorbing primary $\mathcal{C}$-hyperideal is a prime  hyperideal. We obtain that in a local multiplicative hyperring $A$ with the maximal hyperideal $M$, $P \circ M$ is a $(u,v)$-absorbing primary hyperideal of $A$  where $P$ is a prime $\mathcal{C}$-hyperideal of $A$ in Proposition \ref{2}. In Theorem \ref{6}, we show that if there exists a $(u+1,v+1)$-absorbing primary strong $\mathcal{C}$-hyperideal of $A$ or a $(u+1,v)$-absorbing primary strong $\mathcal{C}$-hyperideal of $A$ that is not a $(u,v)$-absorbing primary hyperideal, then $A$ is a local multiplicative hyperring. Theorem \ref{7} presents a case where $(u,v)$-absorbing primary hyperideals are primary hyperideals. In Proposition \ref{ignor}, it is shown that if $P_1,\ldots,P_n$ are $(u,v)$-absorbing primary $\mathcal{C}$-hyperideals of $A$ such that $rad(P_i)=rad(P_j)$ for all $i,j \in \{1,\ldots,n\}$, then the intersection of the $P_i^,$s is a $(u,v)$-absorbing primary hyperideal. We give Example \ref{ignor1}  to show the condition  ``$rad(P_i)=rad(P_j)$ for all $i,j \in \{1,\ldots,n\}$" in Proposition \ref{ignor} is crucial. Moreover, we investigate the stability of $(u,v)$-absorbing primary hyperideals in various hyperring-theoretic constructions. 

Throughout this study, $A$  denotes a commutative multiplicative hyperring with identity element $1$.
\section{  $(u,v)$-absorbing primary hyperideals }
 A proper hyperideal of $P$ of $A$ is said to be 1-absorbing primary hyperideal if  whenever $x,y,z \in A\backslash U(A)$ and $x \circ y \circ z \subseteq P$, then $x \circ y \subseteq  P$ or  $z \in rad(P)$ \cite{anb1}. Now, we aim to generalize this concept to notion of $(u,v)$-absorbing primary  hyperideals and  give some
fundamental theorems and examples about them. We begin with the deﬁnition.

\begin{definition}
Let $P$ be a proper hyperideal of $A$ and $u,v \in \mathbb{N}$ with $u >v$. $P$ refers to  a   $(u,v)$-absorbing primary  hyperideal  if     $x_1 \circ \cdots \circ x_u \subseteq P$ for $x_1,\ldots, x_u \in A \backslash U(A)$, then either $x_1 \circ \cdots \circ x_v \subseteq P$ or $x_{v+1} \circ \cdots \circ x_u \subseteq rad(P)$.
\end{definition}
\begin{example}
\begin{itemize}
\item[\rm{(i)}]~
Assume that $(\mathbb{Z}[i],+,\cdot)$ is the Gaussian integers ring. Consider the multiplicative hyperring $(A_{\Phi},+,\circ)$ where $A_{\phi}=\mathbb{Z}[i]$, $\Phi=\{-1,3\}$ and for any $a,b \in A_{\Phi}$, $a \circ b=\{a \cdot x \cdot b \ \vert \ x \in \Phi\}$. In the hyperring, $P=2\mathbb{Z}[i]=\{-2x-2yi,6x+6yi \ \vert \ x,y \in \mathbb{Z}\}$ is a $(u,v)$-absorbing primary hyperideal of $A_{\Phi}$ for all $u,v \in \mathbb{N}$ with $u >v$.

\item[\rm{(ii)}]~ 
Consider the ring of polynomials $\mathbb{Z}[x]$ where $(\mathbb{Z},+,\cdot)$ is the ring of integers. Let  $A=\mathbb{Z}+3x\mathbb{Z}[x]$, $\alpha \circ \beta=\{2\alpha \beta,4\alpha  \beta\}$ for each $\alpha, \beta\in \mathbb{Z}$ and $P=3x\mathbb{Z}[x]$.  In the hyperring,  $P^2$ is a $(3,2)$-absorbing primary hyperideal of $A$.
\end{itemize}
\end{example}
Recall from \cite{anb8} that a proper hyperideal $P$ of $A$ is  a   $(u,v)$-absorbing prime  hyperideal  if     $x_1 \circ \cdots \circ x_u \subseteq P$ for $x_1,\ldots, x_u \in A \backslash U(A)$ implies either $x_1 \circ \cdots \circ x_v \subseteq P$ or $x_{v+1} \circ \cdots \circ x_u \subseteq P$. 
\begin{remark} \label{r1}
Every $(u,v)$-absorbing prime  hyperideal of $A$ is a $(u,v)$-absorbing primary hyperideal of $A$. 
\end{remark}
The following example shows that the converse of Remark \ref{r1} may not be
true, in general.
\begin{example} \label{er1}
Consider the multiplicative hyperring $(\mathbb{Z}_{\Phi},+,\circ)$ where $\Phi=\{2,3\}$. In the hyperring, $P=\langle 12 \rangle$ is a $(4,2)$-absorbing primary hyperideal of $A$. However, it is not a $(4,2)$-absorbing prime hyperideal of $A$ since $2^3 \circ 3=\{192,288,432,648\} \subseteq P$ but $2 \circ 2=\{8,12\} \nsubseteq P$ and $2 \circ 3=\{12,18\} \nsubseteq P$.
\end{example}
\begin{remark} \label{r2}
\begin{itemize}
\item[\rm{(i)}]~ Every $(u,v)$-absorbing primary  hyperideal of $A$ is a $(u+1,v+1)$-absorbing primary hyperideal of $A$. 

\item[\rm{(ii)}]~ Every $(u,v)$-absorbing primary  hyperideal of $A$ is a $(w,v)$-absorbing primary hyperideal of $A$ for any $w \geq u$. 
\end{itemize}
\end{remark}
The next example is given to explain that the converse of \ref{r2} (ii) may not be always true.
\begin{example} \label{er2}
In Example \ref{er1}, the $(4,2)$-absorbing prime hyperideal $P$ is not $(3,2)$-absorbing primary since $2 \circ 2 \circ 3=\{48,72,108\} \subseteq P$ while $2 \circ 2=\{8,12\} \notin P$, $2 \circ 3=\{12,18\} \nsubseteq P$ and $2,3 \notin rad(P)$.
\end{example}
The following theorem shows that the radical of a $(u,v)$-absorbing primary $\mathcal{C}$-hyperideal of $A$ is a prime hyperideal of $A$.
\begin{theorem} \label{1}
Let $u,v \in \mathbb{N}$ with $u >v$. If $P$ is a $(u,v)$-absorbing primary $\mathcal{C}$-hyperideal of $A$, then $rad(P)$ is a prime hyperideal of $A$.
\end{theorem}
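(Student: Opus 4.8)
The plan is to check primeness of $rad(P)$ straight from the definition: given $a,b\in A$ with $a\circ b\subseteq rad(P)$, I will deduce $a\in rad(P)$ or $b\in rad(P)$. First come the routine reductions. Since $P$ is a proper $\mathcal{C}$-hyperideal, Proposition 3.2 of \cite{das} identifies $rad(P)$ with $\{x\in A:\ x^{n}\subseteq P \text{ for some } n\in\mathbb{N}\}$; as $1\in 1^{n}$ for every $n$ while $1\notin P$, this set does not contain $1$, so $rad(P)$ is a proper hyperideal of $A$. If $a\in U(A)$, say $1\in a'\circ a$, then $b\in 1\circ b\subseteq a'\circ(a\circ b)\subseteq a'\circ rad(P)\subseteq rad(P)$, and symmetrically when $b\in U(A)$; so from now on $a,b\in A\backslash U(A)$, and I argue by contradiction, assuming in addition $a\notin rad(P)$ and $b\notin rad(P)$.

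The heart of the argument is to extract from the mere inclusion $a\circ b\subseteq rad(P)$ a single hyperproduct of nonunits contained in $P$. Choose any $t\in a\circ b$. Then $t\in rad(P)$, so by Proposition 3.2 of \cite{das} there is $m\in\mathbb{N}$ with $t^{m}\subseteq P$, and since $t^{m+1}=t^{m}\circ t\subseteq P$ I may freely enlarge $m$ so that $m\geq\max\{v,\,u-v\}$. From $t\in a\circ b$, commutativity and associativity of $\circ$ give $t^{m}\subseteq(a\circ b)^{m}=a^{m}\circ b^{m}$, whence $a^{m}\circ b^{m}\cap P\neq\varnothing$; because $P$ is a $\mathcal{C}$-hyperideal this upgrades to $a^{m}\circ b^{m}\subseteq P$, that is, the hyperproduct of the $2m$ nonunits $\underbrace{a,\ldots,a}_{m},\underbrace{b,\ldots,b}_{m}$ lies in $P$.

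It remains to feed this into the absorbing-primary hypothesis with suitably boosted parameters. Iterating Remark \ref{r2}(i) shows $P$ is $(u+j,v+j)$-absorbing primary for all $j\geq0$, and then Remark \ref{r2}(ii) shows it is $(w,v+j)$-absorbing primary for all $w\geq u+j$; taking $j=m-v\geq0$ and $w=2m\geq u+(m-v)$ (which uses $m\geq u-v$), $P$ is a $(2m,m)$-absorbing primary hyperideal. Applying this to $a^{m}\circ b^{m}\subseteq P$ with the first $m$ factors equal to $a$ yields $a^{m}\subseteq P$ or $b^{m}\subseteq rad(P)$. In the first case $a\in rad(P)$ at once. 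In the second, since $rad(P)$ is the (nonempty) intersection of the prime hyperideals containing $P$ and each such prime $Q$ absorbs the hyperproduct $b^{m}=b\circ\cdots\circ b$, an immediate induction gives $b\in Q$ for every such $Q$, hence $b\in rad(P)$. Either way we contradict the standing assumption, so $rad(P)$ is prime.

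I expect the middle paragraph to be the only real obstacle: "$a\circ b\subseteq rad(P)$" is a statement about a possibly infinite set whose elements may require arbitrarily large exponents to enter $P$, so no uniform power of $a\circ b$ need lie in $P$; the point is that a single witness $t$, together with the $\mathcal{C}$-hyperideal property applied to $a^{m}\circ b^{m}$, already forces $a^{m}\circ b^{m}\subseteq P$. The rest is bookkeeping with Remark \ref{r2} and the radical characterization for $\mathcal{C}$-hyperideals.
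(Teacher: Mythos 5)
Your proof is correct, and it departs from the paper's argument at the decisive step. Both arguments reduce to nonunit $a,b$ and extract, from a single witness $t\in a\circ b$ with $t^{m}\subseteq P$, the inclusion $a^{m}\circ b^{m}\subseteq P$ via the $\mathcal{C}$-hyperideal property (the paper simply asserts ``$a^{n}\circ b^{n}\subseteq P$ for some $n$'' and skips the unit case for $a,b$, both of which you handle explicitly). The divergence is in how the absorbing-primary hypothesis is then applied. The paper keeps the original parameters $(u,v)$ and feeds the definition the $u$-tuple $a,\dots,a,x,b,\dots,b,y$ with $x\in a^{n}$ and $y\in b^{n}$ used as single factors, which forces a side argument that $x$ and $y$ are nonunits, and then upgrades $a^{v-1}\circ x\subseteq P$ to $a^{v+n-1}\subseteq P$ using the $\mathcal{C}$-property. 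You instead inflate the parameters to $(2m,m)$ by iterating Remark \ref{r2} and apply the definition to the $2m$-tuple $a,\dots,a,b,\dots,b$, whose entries are nonunits by the standing assumption; this buys a cleaner invocation of the definition and eliminates the nonunit check, at the price of leaning on Remark \ref{r2} (stated in the paper without proof, but easily verified by merging adjacent factors, noting that a hyperproduct of nonunits cannot contain a unit). Your endgame, reading $a\in rad(P)$ directly from $a^{m}\subseteq P$ and handling $b^{m}\subseteq rad(P)$ by passing to the primes over $P$, is sound and in fact tidier than the paper's corresponding step.
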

\begin{proof}
Let $P$ be a $(u,v)$-absorbing primary hyperideal of $A$ and $a \circ b \subseteq rad(P)$ for $a,b \in A$. Let us assume that $a,b \in A\backslash U(A)$. Then we get $a^n \circ b^n \subseteq P$ for some $n \in \mathbb{N}$. It follows that  $a^{v-1} \circ a^n \circ b^{u-v-1} \circ b^n \subseteq P$. Take any $x \in a^n$ and $y \in b^n$. Let $x, y \in U(A)$. Then there exist $x^{\prime}, y^{\prime} \in A$ such that $1 \in x \circ x^{\prime}$ and $1 \in y \circ y^{\prime}$. Hence $1 \in a^n \circ x^{\prime}=a \circ (a^{n-1} \circ x^{\prime})$ and $1 \in b^n \circ x^{\prime}=b \circ (b^{n-1} \circ y^{\prime})$. Therefore we  have $1 \in a \circ p$ for some $p \in a^{n-1} \circ x^{\prime}$ and $1 \in b \circ q$ for some $q \in b^{n-1} \circ y^{\prime}$. This means that $a,b \in U(A)$, a contradiction.  Thus $x,y \notin U(A)$. Now, put $x_1=\cdots=x_{v-1}=a$, $x_v=x$, $x_{v+1}=\cdots=x_{u-1}=b$ and $x_u=y$. This implies that $a^{v-1} \circ x=x_1 \circ \cdots \circ x_{v-1} \circ x_v \subseteq P$ or $b^{u-v-1} \circ y=x_{v+1} \circ \cdots \circ x_{u-1} \circ x_u \subseteq rad(P)$ as $P$ is a $(u,v)$-absorbing primary hyperideal of $A$. Since $P$ is a $\mathcal{C}$-hyperideal of $A$,  we conclude that $a^{v+n-1} \subseteq P$ which means $a \in rad(P)$ or $b^{u-v+n-1} \subseteq rad(P)$ which implies $b \in rad(P)$. Consequently, $rad(P)$ is a prime hyperideal of $A$.
\end{proof}
In the following proposition, we analyze when a product of two hyperideals is a $(u,v)$-absorbing primary hyperideal.
\begin{proposition} \label{2}
Let every hyperideal of $A$ be $\mathcal{C}$-hyperideal and $u,v \in \mathbb{N}$ with $u >v$. If $M$ is the only maximal hyperideal of $A$ and $P$ is a prime hyperideal of $A$, then $P \circ M $ is a $(u,v)$-absorbing primary hyperideal of $A$. 
\end{proposition}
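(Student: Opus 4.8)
The plan is to first identify $rad(P\circ M)$ and then reduce the defining condition entirely to the primeness of $P$. I would begin by checking that $P\circ M$ is a proper hyperideal with $rad(P\circ M)=P$. Since $P$ is a hyperideal we have $p\circ m\subseteq P$ for all $p\in P$, $m\in M$, so $P\circ M\subseteq P$; in particular $P\circ M$ is proper and, as the radical is monotone and $P$ (being prime) satisfies $rad(P)=P$, we get $rad(P\circ M)\subseteq rad(P)=P$. For the reverse inclusion, recall that $A$ is local, so the proper hyperideal $P$ is contained in the unique maximal hyperideal $M$; hence for every $p\in P$ we have $p\in M$ and therefore $p\circ p\subseteq P\circ M$, i.e.\ $p$ lies in $\{a\in A:\ a^n\subseteq P\circ M\ \text{for some }n\in\mathbb N\}$. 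Since every hyperideal of $A$ is a $\mathcal C$-hyperideal, Proposition~3.2 of \cite{das} identifies that set with $rad(P\circ M)$, whence $P\subseteq rad(P\circ M)$ and equality follows.

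Next I would take $x_1,\dots,x_u\in A\backslash U(A)$ with $x_1\circ\cdots\circ x_u\subseteq P\circ M$ and establish one of the two required containments. In a local multiplicative hyperring with identity the set of non-units coincides with the unique maximal hyperideal, so $x_i\in M$ for every $i$. Also $x_1\circ\cdots\circ x_u\subseteq P\circ M\subseteq P$, and since $P$ is prime, iterating the (binary) primeness condition with the associativity of $\circ$ shows that $a_1\circ\cdots\circ a_n\subseteq P$ forces $a_i\in P$ for some $i$; applying this yields an index $k\in\{1,\dots,u\}$ with $x_k\in P$.

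Now I would split into cases according to the position of $k$. If $k\ge v+1$, then $x_{v+1}\circ\cdots\circ x_u$ has the factor $x_k\in P$, so by the absorbing property of the hyperideal $P$ we get $x_{v+1}\circ\cdots\circ x_u\subseteq P=rad(P\circ M)$, and we are done. If $k\le v$, I would use commutativity of $\circ$ to rearrange $x_1\circ\cdots\circ x_v$ so that $x_k$ is multiplied by some other $x_j$ with $j\le v$ and $j\ne k$; since $x_k\in P$ and $x_j\in M$, we have $x_k\circ x_j\subseteq P\circ M$, and then the absorbing property of the hyperideal $P\circ M$ gives $x_1\circ\cdots\circ x_v\subseteq P\circ M$. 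In either case $P\circ M$ satisfies the defining condition of a $(u,v)$-absorbing primary hyperideal.

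The two structural facts used above are, I expect, the part requiring the most care: that the non-units of $A$ form the maximal hyperideal $M$, and that $rad(P\circ M)=P$ — the latter being exactly where the hypothesis that every hyperideal is a $\mathcal C$-hyperideal is consumed (via Proposition~3.2 of \cite{das}). The other delicate point is the sub-case $k\le v$: the rearrangement needs a second index $j\le v$, hence $v\ge 2$, so the endpoint situation $v=1$ (where $x_1\circ\cdots\circ x_v$ degenerates to the single element $x_1$ and one must decide whether $x_1\in P\circ M$) has to be treated separately; I expect this to be the main obstacle.
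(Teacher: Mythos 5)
Your proposal follows the same route as the paper's own proof: locality puts every $x_i$ in $M$, primeness of $P$ applied to $x_1\circ\cdots\circ x_u\subseteq P\circ M\subseteq P$ produces an index $k$ with $x_k\in P$, and the position of $k$ relative to $v$ decides which disjunct holds, with $rad(P\circ M)=P$ closing the second case. You are in fact more careful than the paper on exactly the two points you single out: the paper never verifies $rad(P\circ M)=P$ (your argument via $p\circ p\subseteq P\circ M$ and Proposition~3.2 of \cite{das} is precisely where the blanket ``every hyperideal is a $\mathcal{C}$-hyperideal'' hypothesis is consumed), and it never observes that its first case needs a second index $j\le v$.

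The $v=1$ endpoint that you flag but do not settle is a genuine gap, and it cannot be closed: the statement fails there. Take $A=k[x,y]$ localized at $(x,y)$, regarded as a multiplicative hyperring with $a\circ b=\{ab\}$ (so every hyperideal is trivially a $\mathcal{C}$-hyperideal), with $M=(x,y)$ and $P=(x)$. Then $x\circ y^{u-1}\subseteq P\circ M=(x^2,xy)$ with all factors non-units, yet $x\notin P\circ M$ and $y^{u-1}\nsubseteq rad(P\circ M)=(x)$, so $P\circ M$ is not $(u,1)$-absorbing primary for any $u\ge 2$. The paper's proof commits the error silently at exactly the step you isolate: its first case asserts that $x_i\in P$ for some $i\le v$ forces $x_1\circ\cdots\circ x_v\subseteq P\circ M$, which for $v=1$ would require $x_1\in P\circ M$. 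So your argument is complete and correct for $v\ge 2$ and coincides with the paper's there; the obstacle you identify at $v=1$ is real, and the proposition should carry the additional hypothesis $v\ge 2$.
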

\begin{proof}
Assume that $M$ is the only maximal hyperideal of $A$ and $P$ is  a prime $\mathcal{C}$-hyperideal of $A$. Assume that $x_1 \circ \cdots \circ x_u \subseteq P \circ M$ for $x_1,\ldots,x_u \in A \backslash U(A)$. Since $M$ is the only maximal hyperideal of $A$, $x_1,\ldots,x_u \in M$. Let $x_i \in P$ for some $i \in \{1,\ldots,v\}$. Then we get $x_1 \circ \cdots \circ x_v \subseteq P \circ M$. Now, we assume that $x_i \notin P$ for all $i \in \{1,\ldots,v\}$. This implies that $x_1 \circ \cdots \circ x_v \nsubseteq P \circ M$. Thus, we conclude that $x_{v+1} \circ \cdots \circ x_u \subseteq P =rad(P\circ M)$ as  $x_1 \circ \cdots \circ x_u \subseteq P \circ M \subseteq P$ and $x_1 \circ \cdots \circ x_v \nsubseteq P$. Hence, $P \circ M $ is a $(u,v)$-absorbing primary hyperideal of $A$. 
\end{proof}
Next, we investigate when $(P:x)$ is a $(u,v)$-absorbing primary hyperideal of $A$. 
\begin{proposition} \label{3}
Let $P$ be a $(u,v)$-absorbing primary hyperideal of $A$ where $u,v \in \mathbb{N}$ with $u >v$ and $x \in A \backslash (P \cup U(A))$. Then $(P : x)$ is a $(u-1,v-1)$-absorbing primary hyperideal of $A$. 
\end{proposition}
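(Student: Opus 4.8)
The plan is to verify the two clauses of the definition of a $(u-1,v-1)$-absorbing primary hyperideal for $(P:x)$, in each case reducing to the corresponding property of $P$ by appending the extra factor $x$ and using commutativity to move it into first position.

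I would begin with the routine structural facts. That $(P:x)$ is a hyperideal follows as usual from the sub-distributive law (axiom 4) together with $P$ being a subgroup of $(A,+)$ that absorbs multiplication by $A$: if $a,b\in(P:x)$ and $r\in A$, then $(a-b)\circ x\subseteq a\circ x - b\circ x\subseteq P$ and $(r\circ a)\circ x=r\circ(a\circ x)\subseteq r\circ P\subseteq P$. Properness is where $x\notin P$ enters: if $(P:x)=A$ then $1\in(P:x)$, so $x\in 1\circ x\subseteq P$, a contradiction. I would also record $P\subseteq(P:x)$ (since $P\circ x\subseteq P$), hence $rad(P)\subseteq rad((P:x))$ by monotonicity of the radical.

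Now suppose $x_1\circ\cdots\circ x_{u-1}\subseteq(P:x)$ with $x_1,\dots,x_{u-1}\in A\backslash U(A)$. By definition of the residual this means $x_1\circ\cdots\circ x_{u-1}\circ x\subseteq P$, and commutativity lets me rewrite it as $x\circ x_1\circ\cdots\circ x_{u-1}\subseteq P$. The key observation is that $x$ is itself a nonunit — this is exactly where the hypothesis $x\notin U(A)$ is used — so the $u$ elements $x,x_1,\dots,x_{u-1}$ all lie in $A\backslash U(A)$, and I may apply the $(u,v)$-absorbing primary property of $P$ to this ordered product. It yields either $x\circ x_1\circ\cdots\circ x_{v-1}\subseteq P$, which says precisely $x_1\circ\cdots\circ x_{v-1}\subseteq(P:x)$, or $x_v\circ\cdots\circ x_{u-1}\subseteq rad(P)\subseteq rad((P:x))$. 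Either alternative is exactly what the $(u-1,v-1)$-absorbing primary condition for $(P:x)$ demands, so the proof is complete.

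I do not anticipate a genuine obstacle here; the argument is essentially bookkeeping around the index shift. The points that need attention are using both halves of the hypothesis $x\in A\backslash(P\cup U(A))$ — namely $x\notin P$ for properness and $x\notin U(A)$ so that all $u$ factors are nonunits — and invoking commutativity so that the statement of the definition applies verbatim to the reordered product. (One also tacitly assumes $v\geq 2$, so that $(u-1,v-1)$ is an admissible pair.)
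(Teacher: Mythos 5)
Your proposal is correct and follows essentially the same route as the paper's proof: append $x$ as the first of $u$ nonunit factors, apply the $(u,v)$-absorbing primary property of $P$, and translate the two alternatives back through the residual $(P:x)$. The only differences are cosmetic — you spell out properness of $(P:x)$ and the inclusion $rad(P)\subseteq rad((P:x))$, which the paper uses silently, and you rightly flag the tacit requirement $v\geq 2$ for $(u-1,v-1)$ to be an admissible pair.
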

\begin{proof}
Let $x_1 \circ \cdots \circ x_{u-1} \subseteq (P : x)$ for $x_1, \ldots, x_{u-1} \in A \backslash U(A)$ and $x \in A \backslash (P \cup U(A))$ such that $x_1 \circ \cdots \circ x_{v-1} \nsubseteq (P : x)$. Therefore we get $x \circ x_1 \circ \cdots \circ x_{u-1} \subseteq P$. Since $P$ is a $(u,v)$-absorbing primary hyperideal of $A$ and $x \circ x_1 \circ \cdots \circ x_{v-1} \nsubseteq P$, we conclude that $x_v \circ \cdots x_{u-1} \ \subseteq rad(P) \subseteq rad((P : x))$. This shows that $(P : x)$ is a $(u-1,v-1)$-absorbing primary hyperideal of $A$. 
\end{proof}
The following lemma is needed in the proof of our next result.
\begin{lemma} \label{4}
Let $P$ be a strong $\mathcal{C}$-hyperideal of $A$. Then so is $rad(P)$. 
\end{lemma}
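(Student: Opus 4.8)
The plan is to reduce the statement to the nilpotence description of the radical and then use the (sub-)distributive law to show that a power of a finite sum of finite hyperproducts is again contained in such an object.

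First I would record two preliminaries. Setting $n=1$ in the definition of a strong $\mathcal{C}$-hyperideal shows that every strong $\mathcal{C}$-hyperideal is a $\mathcal{C}$-hyperideal; hence, by Proposition 3.2 in \cite{das}, $rad(P)=\{a\in A\ \vert\ a^m\subseteq P\ \text{for some}\ m\in\mathbb{N}\}$. Also $rad(P)$ is a hyperideal, being an intersection of prime hyperideals of $A$ (or all of $A$). So it remains only to verify the strong $\mathcal{C}$-property for $rad(P)$.

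Now fix $x_{ij}\in A$ and $k_i,n\in\mathbb{N}$, put $Y_i=\bigcirc_{j=1}^{k_i}x_{ij}$ and $S=\sum_{i=1}^n Y_i=\sum_{i=1}^n(\bigcirc_{j=1}^{k_i}x_{ij})$, and assume $S\cap rad(P)\neq\varnothing$. Choose $t\in S\cap rad(P)$; by the description of the radical there is an $m\in\mathbb{N}$ with $t^m\subseteq P$. I claim it suffices to prove $S^m:=\underbrace{S\circ\cdots\circ S}_{m}\subseteq P$. Indeed, for every $s\in S$ we would then get $s^m=\bigcirc_{j=1}^m s\subseteq\bigcirc_{j=1}^m S=S^m\subseteq P$, so $s\in rad(P)$, giving $S\subseteq rad(P)$ as required.

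To prove $S^m\subseteq P$, I would expand the product using the sub-distributive inclusions $(y+z)\circ x\subseteq y\circ x+z\circ x$ together with associativity of $+$ and of $\circ$. Iterating $(\sum_i Y_i)\circ C\subseteq\sum_i(Y_i\circ C)$ yields
\[
S^m\ \subseteq\ \sum_{(i_1,\ldots,i_m)} Y_{i_1}\circ\cdots\circ Y_{i_m},
\]
and by associativity of $\circ$ each summand $Y_{i_1}\circ\cdots\circ Y_{i_m}$ is a single hyperproduct of the $k_{i_1}+\cdots+k_{i_m}$ elements occurring in it. Hence $S^m\subseteq T$, where $T=\sum_{r=1}^{N}(\bigcirc_{j=1}^{l_r}z_{rj})$ is a finite sum of finite hyperproducts with $z_{rj}\in A$ and $l_r,N\in\mathbb{N}$. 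Since $t\in S$ we have $\varnothing\neq t^m\subseteq S^m\subseteq T$ while $t^m\subseteq P$, so $T\cap P\neq\varnothing$; because $P$ is a strong $\mathcal{C}$-hyperideal this forces $T\subseteq P$, whence $S^m\subseteq P$, and the reduction of the previous paragraph completes the argument. I expect the only genuine obstacle to be the bookkeeping in this distributive expansion — making precise that a power of a finite sum of finite hyperproducts is contained in another such object, using only the one-sided inclusion in axiom (4) and associativity of the hyperoperation; everything else is routine.
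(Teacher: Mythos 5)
Your proof is correct and follows essentially the same route as the paper's: pick $x\in S\cap rad(P)$ with $x^{m}\subseteq P$ (valid since a strong $\mathcal{C}$-hyperideal is a $\mathcal{C}$-hyperideal, so the nilpotence description of $rad(P)$ applies), observe $S^{m}\cap P\neq\varnothing$, invoke the strong $\mathcal{C}$-property, and conclude $S\subseteq rad(P)$. The one place you go beyond the paper is the distributive expansion showing $S^{m}$ is contained in a finite sum of finite hyperproducts; the paper applies the strong $\mathcal{C}$-condition to $(\sum_{i}(\bigcirc_{j}x_{ij}))^{t}$ directly without justifying that this set has the required form, so your bookkeeping step is a genuine (and welcome) filling-in of a detail the paper leaves implicit.
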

\begin{proof}
Assume that $P$ is a strong $\mathcal{C}$-hyperideal of $A$. Let $\sum_{i=1}^n(\bigcirc_{j=1}^k x_{ij}) \cap rad(P) \neq \varnothing$ for $x_{ij} \in A$ and  $k_i, n \in  \mathbb{N}$. This means that there exists $x \in \sum_{i=1}^n(\bigcirc_{j=1}^k x_{ij}) \cap rad(P)$. Then we conclude  that $x^t \subseteq P$ for some $t \in \mathbb{N}$. Since $P$ is a strong $\mathcal{C}$-hyperideal of $A$ and $(\sum_{i=1}^n(\bigcirc_{j=1}^k x_{ij}))^t \cap P \neq \varnothing$, we get $(\sum_{i=1}^n(\bigcirc_{j=1}^k x_{ij}))^t \subseteq P$ and so $\sum_{i=1}^n(\bigcirc_{j=1}^k x_{ij}) \subseteq rad(P)$, as needed.
\end{proof}
Let $a \in P$ where $P$ is a strong $\mathcal{C}$-hyperideal of $A$. We next handle the case when $a+1$ is a nonunit of $A$.
\begin{theorem} \label{5}
Let $P$ be a strong $\mathcal{C}$-hyperideal of $A$ such that $a+1 \in A \backslash U(A)$ for some $a \in P$ and  $u,v \in \mathbb{N}$ with $u >v$. Then $P$ is a $(u,v)$-absorbing primary  hyperideal of $A$ if and only if  $x_1 \circ \cdots \circ x_u \subseteq P$ for $x_1,\ldots, x_u \in A $ implies that $x_1 \circ \cdots \circ x_v \subseteq P$ or $x_{v+1} \circ \cdots \circ x_u \subseteq rad(P)$.
\end{theorem}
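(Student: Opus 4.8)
The plan is to prove both directions; one is trivial and the other is the real content. The ``if'' direction is immediate: if the stated condition holds for all $x_1,\ldots,x_u \in A$, then in particular it holds when all $x_i \in A\backslash U(A)$, which is precisely the definition of a $(u,v)$-absorbing primary hyperideal. So the entire work lies in the forward direction: assuming $P$ is $(u,v)$-absorbing primary, I must upgrade the conclusion from nonunit arguments to arbitrary arguments. The difficulty is that if some of the $x_i$ are units, the definition says nothing directly, so I need a trick to ``absorb'' the unit factors into a nonunit without changing membership in $P$ or $rad(P)$.

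The key device is the hypothesis that $a+1 \in A\backslash U(A)$ for some $a \in P$. The idea is: given $x_1 \circ \cdots \circ x_u \subseteq P$ with possibly some $x_i$ a unit, I want to replace each offending unit $x_i$ by something like $x_i' \in (a+1)\circ x_i$ or, more carefully, by a nonunit element congruent to $x_i$ modulo a controlled amount. Concretely, first I would handle the case where exactly one argument, say $x_j$, is a unit and the rest are nonunits, and then reduce the general case to this by induction on the number of unit arguments. To dispose of a single unit $x_j$: pick an element $t \in (a+1)\circ x_j \subseteq x_j + (a\circ x_j) \subseteq x_j + P$ (using distributivity and $a \in P$). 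The element $t$ is a nonunit --- because if $t$ were a unit then, from $t \in x_j + (\text{something in } P)$ and $x_j$ a unit, one derives that $a+1$ would have to be a unit (this is where I expect to spend care, using that $U(A)$ behaves well and $1 \in x_j \circ x_j'$ for some $x_j'$). Then replace $x_j$ by $t$ in the product. Since $P$ is a strong $\mathcal{C}$-hyperideal and $\bigcirc_{i\neq j} x_i \circ t \subseteq \bigcirc_{i\neq j} x_i \circ x_j + \bigcirc_{i\neq j} x_i \circ(a\circ x_j) \subseteq P + P = P$, the new product lands in $P$; now all arguments are nonunits, so the $(u,v)$-absorbing primary property applies and yields $x_1\circ\cdots\circ t \cdots \subseteq P$ (if the unit sat among the first $v$) or the analogous conclusion for $rad(P)$ (using Lemma \ref{4}, that $rad(P)$ is also a strong $\mathcal{C}$-hyperideal). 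Finally I would peel $t$ back to $x_j$: from $t \in x_j + P$ I recover that the corresponding product over $\{1,\ldots,v\}$ with $x_j$ restored still lies in $P$ (or $rad(P)$), again by the strong $\mathcal{C}$-hyperideal property and distributivity.

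The main obstacle, and the step I would be most careful about, is verifying that the replacement element $t$ is genuinely a nonunit and that every set-containment step is valid at the level of \emph{subsets} (not just of chosen representatives) --- the hyperoperation forces one to track whole sets, so each ``$\subseteq$'' obtained from distributivity plus $a \in P$ must be checked to actually give containment of the full hyperproduct in $P$, which is exactly what the strong $\mathcal{C}$-hyperideal hypothesis is there to rescue when only a nonempty intersection is visible. A secondary point is the bookkeeping of \emph{where} the unit argument sits: if $x_j$ with $j \le v$ gets replaced and the definition returns $x_{v+1}\circ\cdots\circ x_u \subseteq rad(P)$, that conclusion does not involve $x_j$ at all, so nothing needs undoing; but if it returns the first branch $x_1\circ\cdots\circ t\circ\cdots\circ x_v \subseteq P$, then the peel-back step is needed. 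The induction on the number of unit arguments then finishes the proof, each step removing one unit while preserving $x_1\circ\cdots\circ x_u \subseteq P$.
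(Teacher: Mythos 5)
Your proposal is correct and rests on essentially the same device as the paper's proof: the nonunit $a+1$ with $a\in P$ is used to turn unit factors into nonunit ones so that the $(u,v)$-absorbing primary hypothesis applies, and then distributivity together with the strong $\mathcal{C}$-hyperideal property (and Lemma \ref{4} for the $rad(P)$ branch) strips the auxiliary factor back off. The paper executes this in one stroke --- discarding all unit factors and padding the two blocks with $(a+1)^{v-m}$ and $(a+1)^{u-v-n}$, then expanding by the binomial theorem --- rather than one unit at a time by induction, but the underlying argument is the same.
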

\begin{proof}
$\Longrightarrow$ Let $P$ is a $(u,v)$-absorbing primary  hyperideal of $A$ and $x_1 \circ \cdots \circ x_u \subseteq P$ for $x_1,\ldots, x_u \in A $. Put $t=\min \{i \ \vert \ 1 \leq i \leq u , \ x_i \notin U(A)\}$. Clearly, we have $x_{v+1} \circ \cdots \circ x_u \subseteq P \subseteq rad(P)$ if $t \geq v+1$. Assume that $t \leq v$. Put $X=\{i \ \vert \   1 \leq i \leq v, x_i \notin U(A)\}$ and $Y=\{i \ \vert \   v+1 \leq i \leq u, x_i \notin U(A)\}$. We suppose that $card(X)=m$ and $card(Y)=n$. Therefore we get $(\bigcirc_{k \in X}x_k \circ (a+1)^{v-m}) \circ (\bigcirc_{k \in Y}x_k \circ (a+1)^{u-v-n}) \subseteq P$. Since $P$ is a $(u,v)$-absorbing primary  hyperideal of $A$, we conclude that $\bigcirc_{k \in X}x_k \circ (a+1)^{v-m} \subseteq P$ or $ \bigcirc_{k \in Y}x_k \circ (a+1)^{u-v-n} \subseteq rad(P)$. On the other hand, we have  $a \circ I +1 \subseteq (a+1)^{v-m}=\sum_{j=0}^{v-m} \tbinom{v-m}{j}a^{v-m-j} \circ 1^j$ for some  $I \subseteq A$ and $a \circ J +1 \subseteq (a+1)^{u-v-n}=\sum_{j=0}^{u-v-n} \tbinom{u-v-n}{j}a^{u-v-n-j} \circ 1^j$ for some  $J \subseteq A$. Then we obtain $\bigcirc_{k \in X}x_k \circ (a \circ I+1) \subseteq P$ or $\bigcirc_{k \in Y}x_k \circ (a \circ J +1) \subseteq rad(P)$. Since $rad(P)$ is a strong $\mathcal{C}$-hyperideal of $A$ by Lemma \ref{4} and $a  \in P$, we get $\bigcirc_{k \in X}x_k  \subseteq P$ or $\bigcirc_{k \in Y}x_k \subseteq rad(P)$ and so $x_1 \circ \cdots \circ x_v \subseteq P$ or $x_{v+1} \circ \cdots \circ x_u \subseteq rad(P)$. 

$\Longleftarrow$ It is obvious.
\end{proof}
\begin{theorem} \label{6}
Let $u,v \in \mathbb{N}$ with $u >v$. If there exists a $(u+1,v+1)$-absorbing primary strong $\mathcal{C}$-hyperideal of $A$ or a $(u+1,v)$-absorbing primary strong $\mathcal{C}$-hyperideal of $A$ that is not a $(u,v)$-absorbing primary hyperideal, then $A$ is a local multiplicative hyperring.
\end{theorem}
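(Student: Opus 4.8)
Since by Remark \ref{r2} every $(u,v)$-absorbing primary hyperideal is also $(u+1,v+1)$- and $(u+1,v)$-absorbing primary, this theorem is a converse-type statement, and the plan is to prove its contrapositive: assuming $A$ is not local, I will show that every $(u+1,v+1)$-absorbing primary strong $\mathcal{C}$-hyperideal \emph{and} every $(u+1,v)$-absorbing primary strong $\mathcal{C}$-hyperideal of $A$ is in fact $(u,v)$-absorbing primary, so no hyperideal of the stated kind exists.

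First I would extract the standard non-locality datum. If $A$ has two distinct maximal hyperideals $M_1,M_2$, then $M_1+M_2$ is a hyperideal of $A$ (it is closed under subtraction because $M_1,M_2$ are, and $r\circ(M_1+M_2)\subseteq r\circ M_1+r\circ M_2\subseteq M_1+M_2$) that properly contains $M_1$, hence equals $A$; so $1=m_1+m_2$ for some $m_1\in M_1,\ m_2\in M_2$. Since a maximal hyperideal contains no unit (a unit $x\in M$ would give $1\in y\circ x\subseteq M$, forcing $M=A$), both $m_1$ and $m_2$ lie in $A\backslash U(A)$. This is the only place the non-locality of $A$ is used.

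Next, suppose $P$ is a $(u+1,v+1)$-absorbing primary strong $\mathcal{C}$-hyperideal that is not $(u,v)$-absorbing primary, and fix a witness: nonunits $x_1,\dots,x_u$ with $\bigcirc_{i=1}^{u}x_i\subseteq P$, $\bigcirc_{i=1}^{v}x_i\nsubseteq P$, and $\bigcirc_{i=v+1}^{u}x_i\nsubseteq rad(P)$. For each $j\in\{1,2\}$ the $u+1$ elements $m_j,x_1,\dots,x_u$ are nonunits with product in $P$; applying the $(u+1,v+1)$-absorbing primary property to them, grouped (using commutativity of $\circ$) as $(m_j,x_1,\dots,x_v)$ against $(x_{v+1},\dots,x_u)$, and discarding the alternative $\bigcirc_{i=v+1}^{u}x_i\subseteq rad(P)$, yields $m_j\circ(\bigcirc_{i=1}^{v}x_i)\subseteq P$. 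Setting $S=\bigcirc_{i=1}^{v}x_i$ and using that $1$ is the identity together with one-sided distributivity, $S\subseteq 1\circ S=(m_1+m_2)\circ S\subseteq m_1\circ S+m_2\circ S\subseteq P+P=P$ (a hyperideal being an additive subgroup), which contradicts $\bigcirc_{i=1}^{v}x_i\nsubseteq P$. The $(u+1,v)$-case is parallel but passes through the radical: applying the $(u+1,v)$-absorbing primary property to $x_1,\dots,x_v,m_j,x_{v+1},\dots,x_u$, grouped as $(x_1,\dots,x_v)$ against $(m_j,x_{v+1},\dots,x_u)$, and discarding the alternative $\bigcirc_{i=1}^{v}x_i\subseteq P$, gives $m_j\circ T\subseteq rad(P)$ for $T=\bigcirc_{i=v+1}^{u}x_i$ and $j=1,2$, whence $T\subseteq 1\circ T\subseteq m_1\circ T+m_2\circ T\subseteq rad(P)+rad(P)=rad(P)$ — using that $rad(P)$, being an intersection of hyperideals (or all of $A$), is an additive subgroup — again contradicting the choice of witness.

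The step I expect to take the most care is this distributive-law chase. Distributivity here is only an inclusion, so I must use $S\subseteq 1\circ S$ and $(y+z)\circ s\subseteq y\circ s+z\circ s$ rather than any equality, and I must invoke that $P$ and $rad(P)$ are additive subgroups in order to absorb the sums $P+P$ and $rad(P)+rad(P)$. I must also be careful to place $m_j$ in whichever half of the length-$(u+1)$ product turns the available non-containment hypothesis into the alternative that the absorbing-primary property rules out, so that the desired conclusion $m_j\circ S\subseteq P$ (resp.\ $m_j\circ T\subseteq rad(P)$) is forced; getting this placement wrong would leave the argument stuck.
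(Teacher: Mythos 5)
Your proposal is correct, and it reaches the conclusion by a genuinely different route than the paper. The paper argues forward: from the witness $x_1,\dots,x_u$ it shows that $x\circ x_1\circ\cdots\circ x_v\subseteq P$ for \emph{every} nonunit $x$ (resp.\ $x_{v+1}\circ\cdots\circ x_u\circ x\subseteq rad(P)$ in the $(u+1,v)$ case), then proves that $x+y\in U(A)$ for every nonunit $x$ and unit $y$ by supposing $x+y$ is a nonunit, feeding $x+y$ into the same absorbing property, and using the \emph{strong $\mathcal{C}$-hyperideal} hypothesis (together with Lemma \ref{4}) to upgrade $\bigl((x\circ S)+(y\circ S)\bigr)\cap P\neq\varnothing$ to $(x\circ S)+(y\circ S)\subseteq P$, cancel $x\circ S$, divide by the unit $y$, and finally invoke Lemma 2.6 of \cite{Ghiasvand2} to conclude locality. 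You instead take the contrapositive, extract $1=m_1+m_2$ with $m_1,m_2$ nonunits from two distinct maximal hyperideals, obtain $m_j\circ S\subseteq P$ for \emph{both} $j=1,2$ directly from the absorbing property, and conclude $S\subseteq 1\circ S\subseteq m_1\circ S+m_2\circ S\subseteq P$ using only the inclusion form of distributivity and the fact that $P$ (resp.\ $rad(P)$) is an additive subgroup. This is cleaner in two respects: it bypasses the external characterization of local hyperrings, and it never uses the strong $\mathcal{C}$-hyperideal hypothesis at all, so it actually proves the theorem for arbitrary proper hyperideals. The one point you should make explicit is why ``not local'' yields \emph{two} distinct maximal hyperideals rather than possibly none: since the hypothetical bad $P$ is a proper hyperideal of a hyperring with identity, a standard Zorn's lemma argument places it inside some maximal hyperideal, and non-locality then forces a second one; without that remark the contrapositive has a (vacuous but unaddressed) case.
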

\begin{proof}
Assume that $P$ is a strong $\mathcal{C}$-hyperideal of $A$ that is not a $(u,v)$-absorbing primary hyperideal. This means that  $x_1\circ \cdots \circ x_u \subseteq P$ for some $x_1,\ldots,x_u \in A \backslash U(A)$  but $x_1\circ \cdots \circ x_v \nsubseteq P$ and $x_{v+1} \circ \cdots \circ x_u \nsubseteq rad(P)$. Let $P$ be  a $(u+1,v+1)$-absorbing primary strong $\mathcal{C}$-hyperideal of $A$. Take any $x \in A \backslash U(A)$. Since $x \circ x_1 \circ \cdots \circ x_u \subseteq P$ and $x_{v+1} \circ \cdots \circ x_u \nsubseteq rad(P)$, we conclude that $x \circ x_1\circ \cdots \circ x_v \subseteq P$. Take any $y \in U(A)$. We show that $x+y  \in U(A)$. Let $x+y  \notin U(A)$. From $(x+y) \circ x_1 \circ \cdots \circ x_u \subseteq P$, it follows that $(x+y) \circ x_1\circ \cdots \circ x_v \subseteq P$ because $x_{v+1} \circ \cdots \circ x_u \nsubseteq rad(P)$. Since $P$ is a strong $\mathcal{C}$-hyperideal and $(x \circ x_1\circ \cdots \circ x_v)+(y \circ x_1\circ \cdots \circ x_v) \cap P \neq \varnothing$, we obtain $(x \circ x_1\circ \cdots \circ x_v)+(y \circ x_1\circ \cdots \circ x_v) \subseteq  P$. Since $x \circ x_1 \circ \cdots \circ x_v \subseteq P$, we have $y \circ x_1\circ \cdots \circ x_v \subseteq  P$ which implies $x_1\circ \cdots \circ x_v \subseteq  P$ which is impossible. Hence $x+y  \in U(A)$ which implies $A$ is a local multiplicative hyperring by Lemma 2.6 in \cite{Ghiasvand2}. Now, suppose that $P$ is a $(u+1,v)$-absorbing primary strong $\mathcal{C}$-hyperideal of $A$, $x \in A \backslash U(A)$ and $y \in U(A)$. Again, we show that $x+y  \in U(A)$ and so we are done by Lemma 2.6 in \cite{Ghiasvand2}. Assume that $x+y  \notin U(A)$.  From $ x_1\circ \cdots \circ x_u \circ (x+y) \subseteq P$, it follows that $x_{v+1} \circ \cdots \circ x_u \circ (x+y)\subseteq rad(P)$ as $P$ is a $(u+1,v)$-absorbing primary hyperideal of $A$ and $x_1\circ \cdots \circ x_v \nsubseteq P$. Since $(x_{v+1} \circ \cdots \circ x_u \circ x)+(x_{v+1} \circ \cdots \circ x_u \circ y)\cap rad(P) \neq \varnothing $, we obtain  $(x_{v+1} \circ \cdots \circ x_u \circ x)+(x_{v+1} \circ \cdots \circ x_u \circ y)\subseteq  rad(P) $ by Lemma \ref{4}. This implies that $x_{v+1} \circ \cdots \circ x_u \subseteq  rad(P) $ as $x_{v+1} \circ \cdots \circ x_u \circ x \subseteq rad(P)$ and $y \in U(A)$. This is a contradiction and so $x+y  \in U(A)$.
\end{proof}
The next result gives a case where $(u,v)$-absorbing primary hyperideals are primary hyperideals. 
\begin{theorem} \label{7}
Let $P$ be a strong $\mathcal{C}$-hyperideal of $A$ such that $A$ is not a local multiplicative hyperring and $u,v \in \mathbb{N}$ with $u >v\geq2$. Then $P$ is a $(u,v)$-absorbing primary hyperideal of $A$ if and only if $P$ is a primary hyperideal of $A$.
\end{theorem}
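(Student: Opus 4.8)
The plan is to prove the two implications separately. For the direction ``$P$ primary $\Longrightarrow$ $P$ is $(u,v)$-absorbing primary'' I would argue directly from the definition of a primary hyperideal, using neither the strong $\mathcal{C}$ hypothesis nor the non-locality of $A$. Suppose $x_1\circ\cdots\circ x_u\subseteq P$ with $x_1,\dots,x_u\in A\backslash U(A)$, and put $X=x_1\circ\cdots\circ x_v$ and $Y=x_{v+1}\circ\cdots\circ x_u$. Associativity of $\circ$ gives $x_1\circ\cdots\circ x_u=\bigcup_{s\in X,\,t\in Y}(s\circ t)\subseteq P$, so $s\circ t\subseteq P$ for every $s\in X$ and $t\in Y$. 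If $Y\subseteq rad(P)$ then $x_{v+1}\circ\cdots\circ x_u\subseteq rad(P)$ and we are done; otherwise fix $t_0\in Y\backslash rad(P)$, and for each $s\in X$ the inclusion $s\circ t_0\subseteq P$ together with $t_0\notin rad(P)$ forces $s\in P$ by primariness, so $x_1\circ\cdots\circ x_v=X\subseteq P$.

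For the converse ``$P$ is $(u,v)$-absorbing primary $\Longrightarrow$ $P$ is primary'' I would route the whole argument through Theorem \ref{6}. Since $A$ is not local, the contrapositive of Theorem \ref{6} gives two facts about our strong $\mathcal{C}$-hyperideal $P$: (a) if $P$ is $(m+1,n)$-absorbing primary, then $P$ is $(m,n)$-absorbing primary, for every $m>n$; and (b) if $P$ is $(m+1,n+1)$-absorbing primary, then $P$ is $(m,n)$-absorbing primary, for every $m>n$. Starting from the hypothesis and applying (a) repeatedly, I lower the first index one step at a time to get that $P$ is $(v+1,v)$-absorbing primary; then, applying (b) repeatedly, I push the index pair down the chain $(v+1,v)\to(v,v-1)\to\cdots\to(2,1)$ and conclude that $P$ is $(2,1)$-absorbing primary. (This is exactly where $v\ge 2$ enters: it is the precise condition under which the descending chain terminates at $(2,1)$ while the second index stays a natural number at every step.) Finally I would observe that a $(2,1)$-absorbing primary hyperideal $P$ is primary: if $x\circ y\subseteq P$ with $x,y\in A$ and both are nonunits, then $x\in P$ or $y\in rad(P)$ by definition; and if, say, $x\in U(A)$, choosing $x'\in A$ with $1\in x'\circ x$ gives $y\in 1\circ y\subseteq x'\circ x\circ y\subseteq x'\circ P\subseteq P\subseteq rad(P)$, the case $y\in U(A)$ being symmetric.

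The only genuinely delicate part is the forward implication, and the moral is that one should not try to prove it bare-handed. A direct attack — padding $x\circ y\subseteq P$ up to a product of $u$ nonunits, invoking the $(u,v)$-absorbing primary property, and discharging the ``second block'' via primeness of $rad(P)$ (Theorem \ref{1}) — leaves one with a containment of the form ``$x$ times a product of $v-1$ nonunits $\subseteq P$'', from which $x\in P$ cannot be extracted in general once $v\ge 3$, because $P$ need not be prime. Passing through Theorem \ref{6} removes this obstruction, and the remaining work is purely bookkeeping: checking that every index pair produced in the reduction satisfies the constraint ``first entry $>$ second entry $\ge 1$'' needed to invoke Theorem \ref{6}, together with the elementary identification of ``$(2,1)$-absorbing primary'' with ``primary''.
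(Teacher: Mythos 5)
Your proof is correct and follows essentially the same route as the paper: the substantive direction is obtained by iterating the contrapositive of Theorem \ref{6} to descend to a low-index absorbing primary property and then reading off primariness --- the paper descends only along the $(m+1,n+1)\to(m,n)$ clause, stopping at $(u-v+1,1)$ and testing $a\circ b^{u-v}\subseteq P$, whereas you descend all the way to $(2,1)$ and also treat the unit cases explicitly, which the paper leaves implicit. The converse, which the paper dismisses as obvious, is correctly spelled out; the only inaccuracy is your parenthetical claim that $v\ge 2$ is precisely what makes the descent terminate at $(2,1)$, since $v\ge 1$ already suffices for that bookkeeping (with $v=1$ no applications of clause (b) are needed at all).
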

\begin{proof}
$\Longrightarrow$ Assume that $A$ is not a local multiplicative hyperring and $P$ is a $(u,v)$-absorbing primary hyperideal of $A$. Therefore  $P$ is a $(u-1,v-1)$-absorbing primary hyperideal of $A$  by Theorem \ref{6}. Therefore we conclude that  $P$ is a $(u-v+1,1)$-absorbing primary hyperideal of $A$. Now, let $a \circ b \subseteq P$ for $a,b \in A$ but $a \notin P$. So $a \circ b^{u-v} \subseteq P$. Since $P$ is a $(u-v+1,1)$-absorbing primary hyperideal and $a \notin P$, we obtain $b^{u-v} \subseteq rad(P)$. Take any $x \in b^{u-v}$. Then $x^n \subseteq P$ for some $n \in \mathbb{N}$. Since $b^{n(u-v)} \cap P \neq \varnothing$, we have $b^{n(u-v)} \subseteq  P$ which means $b \in rad(P)$. Thus $P$ is a primary hyperideal of $A$.

$\Longleftarrow$ It is obvious.
\end{proof}
\begin{theorem}
Let $P$ be a strong $\mathcal{C}$-hyperideal of $A$  and $u,v \in \mathbb{N}$ with $u >v$. If $P$ is a $(u+1,v)$-absorbing primary hyperideal of $A$, then $P$ is a $(u,v)$-absorbing primary hyperideal of $A$ or $A$ is a local multiplicative hyperring with maximal hyperideal $M$ such that $rad(P)=M$.
\end{theorem}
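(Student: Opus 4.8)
The plan is to prove the dichotomy directly: if $P$ fails to be $(u,v)$-absorbing primary, then produce the claimed local structure. So assume $P$ is a $(u+1,v)$-absorbing primary strong $\mathcal{C}$-hyperideal of $A$ which is \emph{not} a $(u,v)$-absorbing primary hyperideal. The negation of the $(u,v)$-condition yields $x_1,\ldots,x_u\in A\backslash U(A)$ with $x_1\circ\cdots\circ x_u\subseteq P$, $x_1\circ\cdots\circ x_v\nsubseteq P$ and $x_{v+1}\circ\cdots\circ x_u\nsubseteq rad(P)$. Since $P$ is a $(u+1,v)$-absorbing primary strong $\mathcal{C}$-hyperideal that is not $(u,v)$-absorbing primary, Theorem \ref{6} gives at once that $A$ is local; let $M$ denote its unique maximal hyperideal. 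Moreover, a strong $\mathcal{C}$-hyperideal is in particular a $\mathcal{C}$-hyperideal (the case $n=1$ of the defining condition), and $u+1>v$, so Theorem \ref{1} applied to the pair $(u+1,v)$ shows that $rad(P)$ is a prime hyperideal of $A$; in particular $rad(P)$ is proper.

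The key step is to show $M\subseteq rad(P)$. Fix $a\in M$. As $M$ is a proper hyperideal, $a\notin U(A)$, so $x_1,\ldots,x_u,a$ are $u+1$ nonunits, and from $x_1\circ\cdots\circ x_u\subseteq P$ together with $P$ being a hyperideal we get $x_1\circ\cdots\circ x_u\circ a\subseteq P$. Apply the $(u+1,v)$-absorbing primary property to this product, taking $a$ as the \emph{last} of the $u+1$ factors: either $x_1\circ\cdots\circ x_v\subseteq P$ or $x_{v+1}\circ\cdots\circ x_u\circ a\subseteq rad(P)$. The first is ruled out by the choice of the $x_i$, hence $x_{v+1}\circ\cdots\circ x_u\circ a\subseteq rad(P)$. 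Since $x_{v+1}\circ\cdots\circ x_u\nsubseteq rad(P)$, choose $t\in x_{v+1}\circ\cdots\circ x_u$ with $t\notin rad(P)$; then $t\circ a\subseteq x_{v+1}\circ\cdots\circ x_u\circ a\subseteq rad(P)$, and primeness of $rad(P)$ forces $a\in rad(P)$. As $a\in M$ was arbitrary, $M\subseteq rad(P)$.

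To finish, $M\subseteq rad(P)$ together with the maximality of $M$ forces $rad(P)=M$ or $rad(P)=A$; since $rad(P)$ is proper, $rad(P)=M$, so $A$ is local with maximal hyperideal $M$ and $rad(P)=M$, as required. (If instead $P$ is already $(u,v)$-absorbing primary there is nothing to do.) I anticipate that the only technical care needed is in legitimately invoking Theorems \ref{1} and \ref{6} — checking that $P$ is a $\mathcal{C}$-hyperideal and that $u+1>v$ — and in the elementary hyperoperation inclusions $x_1\circ\cdots\circ x_u\circ a\subseteq P$ and $t\circ a\subseteq x_{v+1}\circ\cdots\circ x_u\circ a$; the substantive idea is simply to slot $a$ in at the end of the product so that the surviving disjunct of the $(u+1,v)$-conclusion is exactly the inclusion into $rad(P)$.
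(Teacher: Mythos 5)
Your proposal is correct and follows essentially the same route as the paper's own proof: invoke Theorem \ref{6} for locality, Theorem \ref{1} for primeness of $rad(P)$, then append an arbitrary $a\in M$ to the product and use the $(u+1,v)$-condition plus primeness to force $a\in rad(P)$. Your version is in fact slightly cleaner in directly selecting an element $t\in x_{v+1}\circ\cdots\circ x_u$ with $t\notin rad(P)$, where the paper phrases the same step as a contradiction.
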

\begin{proof}
Let  $P$ be a $(u+1,v)$-absorbing primary hyperideal of $A$. Assume that $P$ is not a $(u,v)$-absorbing primary hyperideal of $A$. Then there exist $x_1,\ldots,x_u \in A \backslash U(A)$ such that $x_1 \circ \cdots \circ x_u \subseteq P$, $x_1\circ \cdots \circ x_v \nsubseteq P$ and $x_{v+1}\circ \cdots \circ x_u \nsubseteq rad(P)$. On the other hand,   we conclude that $A$ is a local multiplicative hyperring by Theorem \ref{6} as $P$ is a $(u+1,v)$-absorbing primary hyperideal of $A$  that   is not a $(u,v)$-absorbing primary hyperideal of $A$. Let $M$ be the only maximal hyperideal of $A$ and $a \in M$. Since $P$ is a $(u+1,v)$-absorbing primary hyperideal of $A$, $x_1\circ \cdots \circ x_u \circ a \subseteq P$ and $x_1\circ \cdots \circ x_v \nsubseteq P$, we get $x_{v+1}\circ \cdots \circ x_u \circ a \subseteq rad(P)$. By Theorem \ref{1}, $rad(P)$ is a prime hyperideal of $A$. Let  $x \in x_{v+1}\circ \cdots \circ x_u $. If $x \in rad(P)$, then $x_{v+1}\circ \cdots \circ x_u  \subseteq rad(P)$, a contradiction. Then $x \notin rad(P)$. Since $x \circ a \subseteq rad(P)$, we get $a \in rad(P)$ which shows $rad(P)=M$, as required. 
\end{proof}
\begin{proposition} \label{ignor}
Let $P_1, \ldots,P_n$ be $(u,v)$-absorbing primary $\mathcal{C}$-hyperideals of $A$ where $u,v \in \mathbb{N}$ with $u >v$. If $rad(P_i)=rad(P_j)$ for all $i,j \in \{1,\ldots,n\}$, then $\cap_{i=1}^nP_i$ is $(u,v)$-absorbing primary hyperideal of $A$.
\end{proposition}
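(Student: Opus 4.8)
The plan is to write $P=\cap_{i=1}^n P_i$, first identify $rad(P)$, and then run a short case analysis on the defining condition. Since $P\subseteq P_1\subsetneq A$, $P$ is a proper hyperideal, so it makes sense to test the $(u,v)$-absorbing primary property on it.

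\textbf{Step 1: the radical.} Write $Q$ for the common value $rad(P_1)=\cdots=rad(P_n)$. I claim $rad(P)=Q$. The inclusion $rad(P)\subseteq rad(P_j)=Q$ is immediate from $P\subseteq P_j$. For the reverse inclusion — the one actually needed below — take $a\in Q$. Because each $P_i$ is a $\mathcal{C}$-hyperideal, Proposition 3.2 of \cite{das} gives $rad(P_i)=\{b\in A: b^m\subseteq P_i \ \text{for some}\ m\in\mathbb{N}\}$, so there is $m_i$ with $a^{m_i}\subseteq P_i$. Put $m=\max_i m_i$. Using associativity of $\circ$ together with the absorption axiom $r\circ x\subseteq P_i$ for $x\in P_i$, one gets $a^m=a^{m_i}\circ a^{m-m_i}\subseteq P_i$ for every $i$ (the case $m=m_i$ being trivial), hence $a^m\subseteq\cap_{i=1}^n P_i=P$, and therefore $a\in rad(P)$ by the inclusion $\{b: b^m\subseteq P\ \text{for some}\ m\}\subseteq rad(P)$ recorded in the introduction. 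Thus $rad(P)=Q$.

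\textbf{Step 2: the absorbing-primary property.} Suppose $x_1\circ\cdots\circ x_u\subseteq P$ with $x_1,\ldots,x_u\in A\backslash U(A)$. Then $x_1\circ\cdots\circ x_u\subseteq P_i$ for each $i$, and since $P_i$ is a $(u,v)$-absorbing primary hyperideal, for each $i$ either $x_1\circ\cdots\circ x_v\subseteq P_i$ or $x_{v+1}\circ\cdots\circ x_u\subseteq rad(P_i)=Q$. If the second alternative occurs for some index $i$, then $x_{v+1}\circ\cdots\circ x_u\subseteq Q=rad(P)$ by Step 1 and we are finished. Otherwise the first alternative occurs for every $i$, so $x_1\circ\cdots\circ x_v\subseteq\cap_{i=1}^n P_i=P$. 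In either case the defining condition holds, so $P$ is a $(u,v)$-absorbing primary hyperideal of $A$.

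I expect no genuine difficulty; the only point needing care is Step 1, since in a multiplicative hyperring $rad(\cap_i P_i)=\cap_i rad(P_i)$ is not automatic. It is precisely here that both hypotheses enter — the $\mathcal{C}$-hyperideal assumption, needed to invoke the power description of the radical from \cite{das}, and the assumption that all the $rad(P_i)$ coincide, without which the element $a$ produced above could fail to lie in $rad(P)$. This is exactly the phenomenon that Example \ref{ignor1} is meant to exhibit when the common-radical hypothesis is dropped.
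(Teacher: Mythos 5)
Your proof is correct and follows essentially the same route as the paper's: apply the $(u,v)$-absorbing primary property of each $P_i$ to the given product and combine the alternatives using the common radical $Q$. The only cosmetic difference is that you prove $rad(\cap_{i=1}^n P_i)=Q$ directly from the power characterization of the radical for $\mathcal{C}$-hyperideals, whereas the paper cites Proposition 3.3 of \cite{das} for this equality; your self-contained argument is fine and correctly identifies where both hypotheses are used.
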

\begin{proof}
Assume that  $P_1, \ldots,P_n$ are $(u,v)$-absorbing primary $\mathcal{C}$-hyperideals of $A$. Then $rad(P_i)$ is a prime hyperideal of $A$ for all $i \in \{1,\ldots,n\}$ by Theorem \ref{1}. Let us assume $rad(P_i)=Q$ for all $i \in \{1,\ldots,n\}$ where $Q$ is a prime hyperideal of $A$. Put $P=\cap_{i=1}^nP_i$.  Let $x_1\circ \cdots \circ x_u \subseteq P$ for $x_1,\ldots,x_u \in A \backslash U(A)$ such that $x_1\circ \cdots \circ x_v \nsubseteq P$. Then there exists  $t \in \{1,\ldots,n\}$ such that $x_1\circ \cdots \circ x_v \nsubseteq P_t$. Since $P_t$ is a $(u,v)$-absorbing primary hyperideals of $A$ and $x_1\circ \cdots \circ x_u \subseteq P_t$, we conclude that $x_{v+1}\circ \cdots \circ x_u \subseteq rad(P_t)=Q=rad(P)$ by Proposition 3.3 in \cite{das}. Thus $P=\cap_{i=1}^nP_i$ is a $(u,v)$-absorbing primary hyperideal of $A$.
\end{proof}
Note that the conditions ``$rad(P_i)=rad(P_j)$ for all $i,j \in \{1,\ldots,n\}$"in Proposition \ref{ignor} can not be ignored.

\begin{example} \label{ignor1}
Consider the multiplicative hyperring $(\mathbb{Z}_{\Phi},+,\circ)$ where $\Phi=\{2,4\}$. The hyperideals $\langle 3 \rangle$, $\langle 5 \rangle$ and $\langle 7 \rangle$ are $(3,2)$-absorbing primary $\mathcal{C}$-hyperideals of $A$ but  $rad(\langle 3 \rangle) \neq rad(\langle 5 \rangle) \neq rad(\langle 7 \rangle)$ and $\langle 150  \rangle=\langle 3 \rangle \cap \langle 5 \rangle \cap \langle 7 \rangle$ is not $(3,2)$-absorbing primary hyperideals of $A$.
\end{example}
Now, we give a a characterization of $(v+1,v)$-absorbing primary hyperideals of $A$. 
\begin{theorem}
Let $P$ be a $\mathcal{C}$-hyperideal of $A$ and $u,v \in \mathbb{N}$ with $u >v$. Then the followings are equivalent.
\begin{itemize}
\item[\rm{(i)}]~ $P$ is a $(v+1,v)$-absorbing primary hyperideal of $A$. 
\item[\rm{(ii)}]~ If $a_1 \circ \cdots \circ a_v \nsubseteq P$ for all $a_1, \ldots,a_v \in A \backslash U(A)$, then $(P : a_1 \circ \cdots \circ a_v) \subseteq rad(P)$.
\item[\rm{(iii)}]~ If $a_1 \circ \cdots \circ a_v \circ Q \subseteq P$ for some hyperideal $Q$ of $A$ and  $a_1, \ldots,a_v \in A \backslash U(A)$, then $a_1 \circ \cdots \circ a_v \subseteq P$ or $Q \subseteq rad(P)$.
\item[\rm{(iv)}]~If $P_1 \circ \cdots \circ P_v \circ P_{v+1} \subseteq P$ for proper hyperideals  $P_1, \ldots,P_{v+1}$ of $A$, then $P_1 \circ \cdots \circ P_v \subseteq P$ or $P_{v+1} \subseteq rad(P)$.
\end{itemize}
\end{theorem}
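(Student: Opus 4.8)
The plan is to prove the cycle $(i)\Rightarrow(ii)\Rightarrow(iii)\Rightarrow(i)$, which already settles the equivalence of the first three conditions, and then to close the picture with $(iii)\Rightarrow(iv)$ and $(iv)\Rightarrow(iii)$. Two elementary observations will carry most of the weight. First, a proper hyperideal $B$ of $A$ contains no unit: if $c\in U(A)\cap B$ and $1\in c'\circ c$ for some $c'\in A$, then $c'\circ c\subseteq B$ forces $1\in B$, hence $x\in x\circ 1\subseteq B$ for every $x\in A$ and $B=A$, a contradiction; in particular every element of a proper hyperideal is a nonunit. Second, for any hyperideal $P$, if $y_1,\dots,y_n\in A$ with $y_1\circ\cdots\circ y_n\subseteq P$, then $\langle y_1\rangle\circ\cdots\circ\langle y_n\rangle\subseteq P$, and, applying the same reasoning to a single factor, $y_1\circ\cdots\circ y_{n-1}\circ\langle y_n\rangle\subseteq P$. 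For this second fact I would use that $\langle y_i\rangle$ equals the set of finite sums of elements of $A\circ y_i$ (note $y_i\in 1\circ y_i\subseteq A\circ y_i$), expand $s_1\circ\cdots\circ s_n$ with $s_i\in\langle y_i\rangle$ by sub-distributivity into a finite sum of sets of the form $b_1\circ\cdots\circ b_n\circ(y_1\circ\cdots\circ y_n)\subseteq A\circ P\subseteq P$, and use that $P$ is an additive subgroup.

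For $(i)\Rightarrow(ii)$, fix nonunits $a_1,\dots,a_v$ with $a_1\circ\cdots\circ a_v\nsubseteq P$ and take $c\in(P:a_1\circ\cdots\circ a_v)$. I would first check $c\notin U(A)$: otherwise, choosing $c'$ with $1\in c'\circ c$ and using $z\in 1\circ z$, one gets $a_1\circ\cdots\circ a_v\subseteq c'\circ(a_1\circ\cdots\circ a_v\circ c)\subseteq c'\circ P\subseteq P$, a contradiction. Then $a_1\circ\cdots\circ a_v\circ c\subseteq P$ is a product of $v+1$ nonunits, so $(i)$ together with $a_1\circ\cdots\circ a_v\nsubseteq P$ gives $c\in rad(P)$; hence $(P:a_1\circ\cdots\circ a_v)\subseteq rad(P)$. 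For $(ii)\Rightarrow(iii)$: if $a_1\circ\cdots\circ a_v\circ Q\subseteq P$ with the $a_i$ nonunit and $a_1\circ\cdots\circ a_v\nsubseteq P$, then each $q\in Q$ satisfies $a_1\circ\cdots\circ a_v\circ q\subseteq P$, so $q\in(P:a_1\circ\cdots\circ a_v)\subseteq rad(P)$, i.e. $Q\subseteq rad(P)$. For $(iii)\Rightarrow(i)$: given nonunits $x_1,\dots,x_{v+1}$ with $x_1\circ\cdots\circ x_{v+1}\subseteq P$, the single-factor form of the second observation gives $x_1\circ\cdots\circ x_v\circ\langle x_{v+1}\rangle\subseteq P$, and $(iii)$ applied with $Q=\langle x_{v+1}\rangle$ yields $x_1\circ\cdots\circ x_v\subseteq P$ or $x_{v+1}\in\langle x_{v+1}\rangle\subseteq rad(P)$.

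It remains to relate $(iv)$ to the rest. For $(iii)\Rightarrow(iv)$: suppose $P_1\circ\cdots\circ P_{v+1}\subseteq P$ for proper hyperideals with $P_1\circ\cdots\circ P_v\nsubseteq P$. Since $P_1\circ\cdots\circ P_v=\bigcup\{a_1\circ\cdots\circ a_v : a_i\in P_i\}$, I would pick $a_i\in P_i$ with $a_1\circ\cdots\circ a_v\nsubseteq P$; these are nonunits by the first observation, and applying $(iii)$ to $a_1\circ\cdots\circ a_v\circ P_{v+1}\subseteq P$ gives $P_{v+1}\subseteq rad(P)$. For $(iv)\Rightarrow(iii)$: let $a_1\circ\cdots\circ a_v\circ Q\subseteq P$ with the $a_i$ nonunit and $Q$ a hyperideal. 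If $Q=A$, then $a_1\circ\cdots\circ a_v\subseteq a_1\circ\cdots\circ a_v\circ 1\subseteq P$; if $Q$ is proper, the second observation gives $\langle a_1\rangle\circ\cdots\circ\langle a_v\rangle\circ Q\subseteq P$, and $(iv)$ applied to the proper hyperideals $\langle a_1\rangle,\dots,\langle a_v\rangle,Q$ yields $\langle a_1\rangle\circ\cdots\circ\langle a_v\rangle\subseteq P$ — whence $a_1\circ\cdots\circ a_v\subseteq P$ — or $Q\subseteq rad(P)$.

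I expect the main obstacle to lie in the application of $(iv)$ inside the step $(iv)\Rightarrow(iii)$: since $(iv)$ speaks only of \emph{proper} hyperideals, one must ensure that every principal hyperideal $\langle a_i\rangle$ generated by a nonunit $a_i$ is proper — equivalently, that no nonunit of $A$ generates all of $A$ — or else dispose of the configuration $\langle a_i\rangle=A$ directly, removing that factor via $1\in\langle a_i\rangle$, $z\in 1\circ z$, and $A\circ P\subseteq P$. Verifying the second auxiliary observation ($y_1\circ\cdots\circ y_n\subseteq P\Rightarrow\langle y_1\rangle\circ\cdots\circ\langle y_n\rangle\subseteq P$) from sub-distributivity and the additive-subgroup structure of $P$ is the only other non-routine point; everything else is straightforward set-chasing. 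The $\mathcal{C}$-hyperideal hypothesis on $P$ is needed only insofar as one wants to read the membership conclusions through the identification $rad(P)=\{a\in A : a^n\subseteq P\ \text{for some}\ n\in\mathbb{N}\}$.
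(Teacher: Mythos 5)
Your argument is correct and, on the implications $(i)\Rightarrow(ii)$, $(ii)\Rightarrow(iii)$ and $(iii)\Rightarrow(iv)$, essentially coincides with the paper's: the paper also rules out $b\in U(A)$ in $(i)\Rightarrow(ii)$ via $a_1\circ\cdots\circ a_v\subseteq a_1\circ\cdots\circ a_v\circ b\circ b^{-1}\subseteq P$, and also realizes $P_1\circ\cdots\circ P_v\nsubseteq P$ by choosing $a_i\in P_i$ with $a_1\circ\cdots\circ a_v\nsubseteq P$ in $(iii)\Rightarrow(iv)$. Where you diverge is in closing the loop: the paper proves the single cycle $(i)\Rightarrow(ii)\Rightarrow(iii)\Rightarrow(iv)\Rightarrow(i)$, with the last step passing from $a_1\circ\cdots\circ a_{v+1}\subseteq P$ to $\langle a_1\rangle\circ\cdots\circ\langle a_{v+1}\rangle\subseteq\langle a_1\circ\cdots\circ a_{v+1}\rangle\subseteq P$ by citing Proposition 2.15 of Dasgupta and then invoking $(iv)$, whereas you prove $(iii)\Rightarrow(i)$ directly (feeding $(iii)$ the hyperideal $Q=\langle x_{v+1}\rangle$, on which $(iii)$ imposes no properness requirement) and append a separate $(iv)\Rightarrow(iii)$. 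Your re-derivation of the containment $y_1\circ\cdots\circ y_n\subseteq P\Rightarrow\langle y_1\rangle\circ\cdots\circ\langle y_n\rangle\subseteq P$ from sub-distributivity and the additive subgroup structure is sound and replaces the paper's citation. The one genuine delicacy is the point you yourself flag: every implication \emph{out of} $(iv)$ must feed it \emph{proper} hyperideals, and in a general (non-strongly-distributive) multiplicative hyperring it is not automatic that $\langle a\rangle\neq A$ for a nonunit $a$, since $1=na+\sum x_i$ with $x_i\in r_i\circ a$ need not yield $1\in t\circ a$. Your proposed repair for the case $\langle a_i\rangle=A$ (deleting that factor) does not quite close, because after deletion only $v$ hyperideals remain while $(iv)$ concerns products of $v+1$. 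However, the paper's own $(iv)\Rightarrow(i)$ makes exactly the same unspoken assumption that the $\langle a_i\rangle$ are proper, so this is a loose end shared with the source rather than a defect introduced by your route; indeed your reorganization confines the issue to the single implication $(iv)\Rightarrow(iii)$, while the paper's version lets it sit on the critical path back to $(i)$.
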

\begin{proof}
(i) $\Longrightarrow$ (ii) Assume that $P$ is a $(v+1,v)$-absorbing primary hyperideal of $A$ and $a_1 \circ \cdots \circ a_v \nsubseteq P$ for all $a_1, \ldots,a_v \in A \backslash U(A)$. Let $b \in (P : a_1 \circ \cdots \circ a_v)$.  So $a_1 \circ \cdots \circ a_v \circ b \subseteq P$. Assume that $b \in U(A)$. Then we have $a_1 \circ \cdots \circ a_v \subseteq a_1 \circ \cdots \circ a_v \circ 1 \subseteq a_1 \circ \cdots \circ a_v \circ b \circ b^{-1} \subseteq P$, a contradiction. Then $b \notin U(A)$. By the hypothesis, we conclude that $b \in rad(P)$ as $a_1 \circ \cdots \circ a_v \nsubseteq P$. Thus we have $(P : a_1 \circ \cdots \circ a_v) \subseteq rad(P)$.

(ii) $\Longrightarrow$ (iii) Assume that $a_1 \circ \cdots \circ a_v \circ Q \subseteq P$ for some hyperideal $Q$ of $A$ and  $a_1, \ldots,a_v \in A \backslash U(A)$  such that $a_1 \circ \cdots \circ a_v \nsubseteq P$. From $a_1 \circ \cdots \circ a_v \circ Q \subseteq P$, it follows that $Q \subseteq (P : a_1 \circ \cdots \circ a_v)$. Thus,  we conclude that $Q \subseteq rad(P)$ by (ii). 

(iii) $\Longrightarrow$ (iv) Let $P_1 \circ \cdots \circ P_v \circ P_{v+1} \subseteq P$ for proper hyperideals  $P_1, \ldots,P_v, P_{v+1}$ of $A$ such that $P_1 \circ \cdots \circ P_v \nsubseteq P$. This implies that $a_1\circ \cdots \circ a_v \nsubseteq P$ for some $a_1 \in P_1, \ldots, a_v \in P_v$. From $a_1\circ \cdots \circ a_v \circ P_{v+1} \subseteq P$ it follows that $P_{v+1} \subseteq rad(P)$ by (iii). 

(iv) $\Longrightarrow$ (i) Let $a_1 \circ \cdots \circ a_v \circ a_{v+1} \subseteq P$ for $a_1,\dots, a_v, a_{v+1} \in A \backslash U(A)$. Then we have  $\langle a_1 \rangle \circ \cdots \circ \langle a_v \rangle \circ \langle a_{v+1} \rangle \subseteq \langle a_1 \circ \cdots \circ a_v \circ a_{v+1} \rangle \subseteq P$ by Proposition 2.15 in \cite{das}. By (iv), we conclude that $\langle a_1 \rangle \circ \cdots \circ \langle a_v \rangle \subseteq P$ or $\langle a_{v+1} \rangle \subseteq  rad(P)$. This implies that $a_1 \circ \cdots \circ a_v \subseteq P$ or $a_{v+1} \in rad(P)$. Consequently, $P$ is a $(v+1,v)$-absorbing primary hyperideal of $A$. 
\end{proof}

Assume that $A$ is  a hyperdomain with quotient hyperﬁeld $F$. Recall from \cite{Ghiasvand2} that a hyperdomain
 $A$ is said to be a Dedekind hyperdomain if every nonzero proper hyperideal of $A$ is invertible, that is, for every nonzero proper hyperideal $P$ of $A$, $P\circ P^{-1}=A$ where $P^{-1}=\{x \in F \ \vert \ x \circ P \subseteq  A\}$. Now, we examine  when a $\mathcal{C}$-hyperideal in  a Dedekind hyperdomain is  a $(u,v)$-absorbing primary hyperideal.
\begin{theorem}
Let $A$ be a Dedekind hyperdomain, $P$ a $\mathcal{C}$-hyperideal of $A$ and  $u,v \in \mathbb{N}$ with $u >v$. Then $P$ is a $(u,v)$-absorbing primary hyperideal of $A$ if and only if $rad(P)$ is a prime hyperideal of $A$.
\end{theorem}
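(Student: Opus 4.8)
The forward implication costs nothing: if $P$ is a $(u,v)$-absorbing primary $\mathcal{C}$-hyperideal of $A$, then $rad(P)$ is a prime hyperideal by Theorem \ref{1}, with no appeal to the Dedekind hypothesis. So the plan concentrates entirely on the converse, and the idea is to prove the stronger-looking statement that in a Dedekind hyperdomain the hypothesis ``$rad(P)$ is prime'' already forces $P$ to be a \emph{primary} hyperideal; the conclusion then follows from the elementary fact that every primary hyperideal is $(u,v)$-absorbing primary.

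First I would dispatch that elementary fact. Suppose $P$ is primary, $x_1 \circ \cdots \circ x_u \subseteq P$ with $x_1, \ldots, x_u \in A \backslash U(A)$, and $x_{v+1} \circ \cdots \circ x_u \nsubseteq rad(P)$; choose $z \in (x_{v+1} \circ \cdots \circ x_u) \backslash rad(P)$. For any $w \in x_1 \circ \cdots \circ x_v$, associativity of $\circ$ gives $w \circ z \subseteq (x_1 \circ \cdots \circ x_v) \circ (x_{v+1} \circ \cdots \circ x_u) = x_1 \circ \cdots \circ x_u \subseteq P$, and since $z \notin rad(P)$ primariness of $P$ forces $w \in P$; hence $x_1 \circ \cdots \circ x_v \subseteq P$. (This uses neither the $\mathcal{C}$-hyperideal condition nor the Dedekind hypothesis.)

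The substance is therefore: if $A$ is a Dedekind hyperdomain and $rad(P) = Q$ is prime, then $P$ is primary. If $P = A$ then $rad(P) = A$ is not prime and there is nothing to prove, and if $P = 0$ then $P$ is prime, hence primary, in the hyperdomain $A$; so assume $P$ is proper and nonzero, whence $Q \neq 0$ (otherwise $P \subseteq rad(P) = 0$). Here I would invoke the structure theory of Dedekind hyperdomains underlying the definition recalled from \cite{Ghiasvand2}: every nonzero prime hyperideal is maximal, and every nonzero proper hyperideal factors essentially uniquely into prime hyperideals. Comparing radicals in such a factorization $P = Q_1^{e_1} \circ \cdots \circ Q_k^{e_k}$ — the radical being $Q_1 \cap \cdots \cap Q_k$, which for distinct maximal $Q_i$ is prime only when $k = 1$ — shows that a single prime occurs, so $P = Q^n$ for some $n \in \mathbb{N}$ with $Q$ maximal. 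To see that $Q^n$ is primary it suffices to show that $a \circ b \subseteq Q^n$ with $a \notin Q$ forces $b \in Q^n$ (this yields the defining implication for $P = Q^n$: from $x \circ y \subseteq Q^n$ with $y \notin Q = rad(Q^n)$ it gives $x \in Q^n$, so $x \circ y \subseteq Q^n$ forces $x \in Q^n$ or $y \in rad(Q^n)$). Given $a \circ b \subseteq Q^n$ with $a \notin Q$: since $Q$ is maximal, the hyperideal $\langle a \rangle + Q$ strictly contains $Q$, hence equals $A$, so $\langle a \rangle + Q^n = A$ as well (raise the identity to the $n$th power, using $(\langle a \rangle + Q)^n = A^n = A$ and the obvious expansion $(\langle a \rangle + Q)^n \subseteq \langle a \rangle + Q^n$); then
\[
\langle b \rangle = \langle b \rangle \circ (\langle a \rangle + Q^n) \subseteq \langle a \rangle \circ \langle b \rangle + \langle b \rangle \circ Q^n \subseteq \langle a \circ b \rangle + Q^n \subseteq Q^n,
\]
where the middle inclusion uses Proposition 2.15 in \cite{das} and the last uses $a \circ b \subseteq Q^n$; so $b \in Q^n$. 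This proves $P$ primary, and with it the theorem.

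The step I expect to be the real obstacle is the structural input rather than any of these manipulations: one must be sure that nonzero prime hyperideals of a Dedekind hyperdomain are maximal (Krull dimension at most one) and that hyperideals factor uniquely into primes — classical in the ring case, and presumably recorded in \cite{Ghiasvand2} or extractable from the defining invertibility of nonzero proper hyperideals, but to be pinned down carefully. Everything else — the bookkeeping with $rad$, sums and products of hyperideals, coprime hyperideals raised to powers, and Proposition 2.15 of \cite{das} — is routine. A slightly slicker variant of the primariness step is to pass to the quotient hyperring $A/P$, which, since $Q$ is the only prime containing $P$, is local with maximal hyperideal $Q/P$, and to note that a coset outside $Q/P$ is a unit in $A/P$; but that formulation relies on the standard fact, used already in the proof of Proposition \ref{2}, that the nonunits of a local multiplicative hyperring lie in its maximal hyperideal.
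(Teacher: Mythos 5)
Your forward direction matches the paper's (both just cite Theorem \ref{1}), and your closing observation that a primary hyperideal is automatically $(u,v)$-absorbing primary is exactly the last step of the paper's argument --- you actually supply the short verification that the paper asserts without proof. For the converse, however, you take a genuinely different and heavier route. The paper does not factor $P$ at all: it notes that $rad(P)$ is maximal (this is the only structural consequence of the Dedekind hypothesis it uses) and then runs the classical ``radical maximal implies primary'' argument at the element level --- from $x \circ y \subseteq P$ with $y \notin rad(P)$, maximality gives $1 \in y \circ s + a$ with $a \in rad(P)$, hence $a^n \subseteq P$ for some $n$; expanding $(r+a)^n$ for $r \in y \circ s$ and multiplying through by $x$ lands $x$ in $P$. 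Your route instead invokes unique factorization of hyperideals into primes to reduce to $P = Q^n$ with $Q$ maximal, and then proves $Q^n$ primary via the coprimality computation $\langle a \rangle + Q^n = A$. Both are sound in outline, but yours depends on the full factorization theorem for Dedekind hyperdomains --- a substantial structure theorem that, as you yourself flag, is nowhere established in the paper and would have to be extracted from the invertibility definition; the paper's argument needs only that $rad(P)$ is maximal, a much lighter (though likewise unproved there) input. What your version buys in exchange: it handles the degenerate cases $P = 0$ and $rad(P) = 0$ that the paper silently skips, and it makes the final ``primary $\Rightarrow$ $(u,v)$-absorbing primary'' implication explicit. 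If you want your proof to stand on its own, either prove the factorization theorem in the hyperring setting or replace the primariness step with the paper's direct maximal-radical argument, which requires no factorization whatsoever.
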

\begin{proof}
$\Longrightarrow$ Since $P$ is a $(u,v)$-absorbing primary hyperideal of $A$, the calim follows from \ref{1}.

$\Longleftarrow$ Let $rad(P)$ be a prime hyperideal of $A$. By the hypothesis, $rad(P)$ is maximal. Assume that $x \circ y \subseteq P$ and $y \notin rad(P)$. Since $rad(P)$ is maximal hyperideal of $A$, then $\langle y, rad(P) \rangle=A$. Then there exists $a \in rad(P)$ and $s \in A$ such that $1 \in y \circ s +a$. So, we obtain $1=r+a$ for some  $r \in y \circ s$. Since $a \in rad(P)$, there exists $n \in \mathbb{N}$ such that $a^n \subseteq P$. Therefore we have $1 \in (r+a)^n \subseteq \sum_{j=0}^{n-1} \tbinom{n}{j}r^{n-j}\circ a^j+  a^n$.  If follows that $x \in x \circ 1 \subseteq x \circ (\sum_{j=0}^{n-1} \tbinom{n}{j}r^{n-j}\circ a^j+a^n )\subseteq \sum_{j=0}^{n-1} \tbinom{n}{j} x \circ r^{n-j}\circ a^j+x \circ a^n \subseteq \sum_{j=0}^{n-1} \tbinom{n}{j} x \circ (y\circ s)^{n-j}\circ a^j+x \circ a^n \subseteq P$ which shows $P$ is a primary hyperideal of $A$. Thus we conclude that $P$ is a $(u,v)$-absorbing primary hyperideal of $A$.
\end{proof}
Recall from \cite{Ghiasvand2} that a hyperring is divided if for each prime  hyperideal $Q$ of $A$, we have $Q \subseteq \langle a \rangle$ for every $a \in A\backslash Q$. The following theorem shows that a $\mathcal{C}$-hyperideal of a divided multiplicative hyperring $A$ is a $(u,v)$-absorbing primary hyperideal  if and only if it is a primary hyperideal.
\begin{theorem}
Let $A$ be a divided multiplicative hyperring, $P$ a $\mathcal{C}$-hyperideal of $A$ and  $u,v \in \mathbb{N}$ with $u >v$. Then $P$ is a $(u,v)$-absorbing primary hyperideal of $A$ if and only if $P$ is a primary hyperideal of $A$.
\end{theorem}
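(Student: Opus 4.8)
The plan is to treat ``primary $\Rightarrow$ $(u,v)$-absorbing primary'' as a triviality and to prove the converse by a case split on whether $x$ lies in $rad(P)$, with the divided hypothesis doing the real work in the remaining case.

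For the easy direction: if $P$ is primary and $x_1\circ\cdots\circ x_u\subseteq P$ with all $x_i\in A\backslash U(A)$, set $B=x_1\circ\cdots\circ x_v$ and $C=x_{v+1}\circ\cdots\circ x_u$, so $B\circ C\subseteq P$; if $B\nsubseteq P$, pick $b\in B\backslash P$, and then $b\circ c\subseteq B\circ C\subseteq P$ for every $c\in C$, so primality of $P$ forces $c\in rad(P)$, i.e. $C\subseteq rad(P)$. This uses neither ``divided'' nor ``$\mathcal{C}$''.

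For ``$\Rightarrow$'': assume $P$ is a $(u,v)$-absorbing primary $\mathcal{C}$-hyperideal, so $rad(P)$ is prime by Theorem \ref{1}. Let $x\circ y\subseteq P$ with $x\notin P$; I want $y\in rad(P)$. First I would clear the unit cases: if $x\in U(A)$, say $1\in x'\circ x$, then $y\in 1\circ y\subseteq x'\circ x\circ y\subseteq x'\circ P\subseteq P$, and if $y\in U(A)$ then $x\in P$, excluded; so assume $x,y\in A\backslash U(A)$. If $x\notin rad(P)$, then $x\circ y\subseteq P\subseteq rad(P)$ and primality of $rad(P)$ give $y\in rad(P)$. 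So suppose $x\in rad(P)\backslash P$ and, for a contradiction, $y\notin rad(P)$. If $v=1$, pad $x\circ y\subseteq P$ to $x\circ y^{u-1}\subseteq P$ (here $u\ge2$) and apply the $(u,1)$-condition: it gives $x\in P$ (contradiction) or $y^{u-1}\subseteq rad(P)$, whence $y\in rad(P)$ by peeling off factors of $y$ using that $rad(P)$ is prime --- a contradiction. So assume $v\ge2$.

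Here I would invoke that $A$ is divided. Since $rad(P)$ is prime and $y\notin rad(P)$, also $y^{v-1}\nsubseteq rad(P)$; fix $d\in y^{v-1}\backslash rad(P)$. As $A$ is divided, $rad(P)\subseteq\langle d\rangle$, and $x\in rad(P)$, so $x\in\langle d\rangle$; unravelling the principal hyperideal should produce $r\in A$ with $x\in r\circ d\subseteq r\circ y^{v-1}$. If $r\in U(A)$, say $1\in r'\circ r$, then from $x\circ y\subseteq r\circ y^{v}$, $\varnothing\ne x\circ y\subseteq P$ and the $\mathcal{C}$-hyperideal property we get $r\circ y^{v}\subseteq P$, hence $y^{v}\subseteq 1\circ y^{v}\subseteq r'\circ r\circ y^{v}\subseteq r'\circ P\subseteq P$ and $y\in rad(P)$, a contradiction; so $r\in A\backslash U(A)$, and also $r\notin P$ (else $x\in r\circ y^{v-1}\subseteq P$). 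The same computation gives $r\circ y^{v}\subseteq P$, hence $r\circ y^{u-1}\subseteq P$ after padding with the $(u-1)-v\ge0$ missing factors of $y$ (using $u>v$); this is a product of exactly $u$ nonunits, so the $(u,v)$-condition yields $r\circ y^{v-1}\subseteq P$ or $y^{u-v}\subseteq rad(P)$. The latter gives $y\in rad(P)$; the former gives $x\in r\circ y^{v-1}\subseteq P$, contradicting $x\notin P$. Either way we reach a contradiction, so $y\in rad(P)$ and $P$ is primary.

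The step I expect to be the main obstacle is ``$x\in\langle d\rangle\Rightarrow x\in r\circ y^{v-1}$ for a single $r$'': in a multiplicative hyperring $\langle d\rangle$ need not coincide with $A\circ d$, so unravelling it may only give $x$ inside a finite sum $\sum_i r_i\circ d$. The intended remedy is to run the computation of the previous paragraph on each summand, obtain $r_i\circ y^{v-1}\subseteq P$ for every $i$, and add back up to conclude $x\in\sum_i(r_i\circ y^{v-1})\subseteq P$; making this airtight depends on the precise description of principal hyperideals in \cite{das} and on how it meshes with the $\mathcal{C}$-hyperideal hypothesis (as opposed to the strong $\mathcal{C}$ one). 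Everything else is routine bookkeeping with the $(u,v)$-condition and with $rad(P)$ being prime.
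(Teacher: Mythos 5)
Your argument is essentially the paper's own proof of this theorem: the paper likewise notes $rad(P)$ is prime by Theorem \ref{1}, reduces to the case $a\in rad(P)\backslash P$ and $b\notin rad(P)$, picks $c\in b^{v-1}\backslash rad(P)$, uses the divided hypothesis to write $a\in c\circ r$ with $r$ a nonunit, and applies the $(u,v)$-condition to $b^{v-1}\circ r\circ b^{u-v}\subseteq P$ (obtained via the $\mathcal{C}$-hyperideal property from $a\circ b^{u-v}\subseteq P$) to force $b^{v-1}\circ r\subseteq P$ and hence $a\in P$. The one step you flag as the main obstacle --- extracting a single $r$ with $x\in r\circ d$ from $x\in\langle d\rangle$ rather than from $x\in A\circ d$ --- is exactly the step the paper asserts without justification (``there exists $r \in A$ such that $a \in c \circ r$''), so your proposal, including its explicit treatment of the unit cases and of $v=1$, is if anything more careful than the published argument.
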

\begin{proof}
$\Longrightarrow $ Let $P$ be a $(u,v)$-absorbing primary hyperideal of $A$ and $a \circ b \subseteq P$ for $a,b \in A \backslash U(A)$ but $ b\notin rad(P)$. Since $P$ is a $(u,v)$-absorbing primary hyperideal of $A$, $rad(P)$ is a prime hyperideal of $A$ by Theorem \ref{1}. This implies that $a \in rad(P)$ as $a \circ b \subseteq rad(P)$ and $b \notin rad(P)$. Take any $c \in b^{v-1}$. Since $A$ is  a divided multiplicative hyperring and $c \notin rad(P)$, we get $rad(P) \subseteq \langle c \rangle$. Since $a \in rad(P)$, there exists $r \in A$ such that $a \in c \circ r$. Let $r \in U(A)$. Then $a \circ r^{-1} \subseteq  c \circ r \circ r^{-1}$. Since $rad(P)$ is a $\mathcal{C}$-hyperideal of $A$ and $ c \circ r \circ r^{-1} \cap rad(P) \neq \varnothing$, we obtain $c \in c \circ 1 \subseteq c \circ r \circ r^{-1} \subseteq rad(P)$, a contradiction. Hence Let $r \notin U(A)$. Since $P$ is a $(u,v)$-absorbing primary hyperideal of $A$ and $a \circ b ^{u-v} \subseteq b^{v-1} \circ r \circ b^{u-v} \subseteq P$, we conclude that $b^{v-1} \circ r \subseteq P$ and so $a \in P$. Consequently, $P$ is a primary hyperideal of $A$.

$\Longleftarrow$ It is straightforward. 
\end{proof}
Recall from \cite{f10} that a mapping $\eta$ from the commutative multiplicative hyperring
$(A_1, +_1, \circ _1)$ into the commutative multiplicative hyperring $(A_2, +_2, \circ _2)$ is a hyperring good homomorphism if $\eta(x +_1 y) =\eta(x)+_2 \eta(y)$ and $\eta(x\circ_1y) = \eta(x)\circ_2 \eta(y)$ for every $x,y \in A_1$.

\begin{theorem} \label{homo} 
Suppose that $A_1$ and $A_2$ are two commutative multiplicative hyperrings such that the mapping $\eta$ from $ A_1$ into $ A_2$ is a hyperring
good homomorphism,  $\eta(x) \notin U(A_2)$ for each $x \in A_1 \backslash U(A_1)$ and $u,v \in \mathbb{N}$ with $u >v$. Then the followings are satisfied:
\begin{itemize}
\item[\rm{(i)}]~ If $P_2$ is a $(u,v)$-absorbing primary $\mathcal{C}$-hyperideal of $A_2$, then $\eta^{-1}(P_2)$ is a $(u,v)$-absorbing primary  $\mathcal{C}$-hyperideal of $A_1$.
\item[\rm{(ii)}]~ If  $P_1$ is a is a $(u,v)$-absorbing primary $\mathcal{C}$-hyperideal of $A_1$ with $Ker (\eta) \subseteq P_1$ and $\eta$ is surjective, then $\eta(P_1)$ is a $(u,v)$-absorbing primary  $\mathcal{C}$-hyperideal of $A_2$.
\end{itemize}
\end{theorem}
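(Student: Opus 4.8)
The plan is to handle (i) and (ii) in parallel: in each case I will check that the indicated set is a proper $\mathcal{C}$-hyperideal and then verify the $(u,v)$-absorbing primary condition by transporting the defining inclusions across $\eta$. The only nontrivial auxiliary fact I will need is that, for a $\mathcal{C}$-hyperideal $B$, one has $rad(B)=\{b\mid b^{m}\subseteq B \text{ for some } m\in\mathbb{N}\}$ (Proposition 3.2 in \cite{das}); this is exactly what lets radicals be moved through $\eta$.

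For (i), I would first note that $\eta^{-1}(P_{2})$ is a proper hyperideal of $A_{1}$: it contains $0$, is closed under differences and under multiplication by arbitrary elements because $\eta$ respects $+_{1},+_{2}$ and $\circ_{1},\circ_{2}$, and it is proper since $\eta(1_{A_{1}})$ is the identity of $A_{2}$, which lies outside $P_{2}$. It is a $\mathcal{C}$-hyperideal: if $\bigcirc_{i=1}^{n}a_{i}$ meets $\eta^{-1}(P_{2})$, then $\bigcirc_{i=1}^{n}\eta(a_{i})=\eta(\bigcirc_{i=1}^{n}a_{i})$ meets $P_{2}$, hence lies in $P_{2}$ as $P_{2}$ is a $\mathcal{C}$-hyperideal, whence $\bigcirc_{i=1}^{n}a_{i}\subseteq\eta^{-1}(P_{2})$. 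The same computation gives $\eta^{-1}(rad(P_{2}))\subseteq rad(\eta^{-1}(P_{2}))$: if $\eta(a)^{m}\subseteq P_{2}$ then $a^{m}\subseteq\eta^{-1}(P_{2})$, so $a\in rad(\eta^{-1}(P_{2}))$. With these in hand the main step is routine: if $x_{1}\circ_{1}\cdots\circ_{1}x_{u}\subseteq\eta^{-1}(P_{2})$ with all $x_{i}\in A_{1}\backslash U(A_{1})$, then $\eta(x_{i})\in A_{2}\backslash U(A_{2})$ by hypothesis and $\eta(x_{1})\circ_{2}\cdots\circ_{2}\eta(x_{u})\subseteq P_{2}$; applying the $(u,v)$-absorbing primary property of $P_{2}$ and pulling the outcome back through $\eta$ (using the radical inclusion just proved) yields $x_{1}\circ_{1}\cdots\circ_{1}x_{v}\subseteq\eta^{-1}(P_{2})$ or $x_{v+1}\circ_{1}\cdots\circ_{1}x_{u}\subseteq rad(\eta^{-1}(P_{2}))$.

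For (ii), the hypothesis $Ker(\eta)\subseteq P_{1}$ gives $\eta^{-1}(\eta(P_{1}))=P_{1}$, so $\eta(P_{1})$ is proper (otherwise $P_{1}=A_{1}$), and surjectivity of $\eta$ makes $\eta(P_{1})$ a hyperideal and, by the same bookkeeping as in (i), a $\mathcal{C}$-hyperideal; moreover $a^{m}\subseteq P_{1}$ forces $\eta(a)^{m}\subseteq\eta(P_{1})$, so $\eta(rad(P_{1}))\subseteq rad(\eta(P_{1}))$. For the main step, given $y_{1}\circ_{2}\cdots\circ_{2}y_{u}\subseteq\eta(P_{1})$ with $y_{i}\in A_{2}\backslash U(A_{2})$, I would pick preimages $x_{i}$ with $\eta(x_{i})=y_{i}$; since $\eta$ carries units to units, each $x_{i}$ is a nonunit, and $\eta(x_{1}\circ_{1}\cdots\circ_{1}x_{u})=y_{1}\circ_{2}\cdots\circ_{2}y_{u}\subseteq\eta(P_{1})$ gives $x_{1}\circ_{1}\cdots\circ_{1}x_{u}\subseteq\eta^{-1}(\eta(P_{1}))=P_{1}$. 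Applying the $(u,v)$-absorbing primary property of $P_{1}$ and pushing the resulting inclusion forward through $\eta$ finishes the argument.

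I expect the one genuinely delicate point to be the unit bookkeeping in (ii): one must know that a preimage of a nonunit is a nonunit, i.e. that $\eta$ maps $U(A_{1})$ into $U(A_{2})$, which is where I would invoke preservation of the identity $1$ by $\eta$; note that the one-directional hypothesis $\eta(A_{1}\backslash U(A_{1}))\subseteq A_{2}\backslash U(A_{2})$ is precisely what (i) uses in the opposite direction. Apart from that, the whole proof is a transparent transport of the defining inclusions across $\eta$, legitimized by the two radical inclusions, which rest only on the $\mathcal{C}$-hyperideal hypotheses and Proposition 3.2 of \cite{das}.
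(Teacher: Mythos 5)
Your proof is correct and follows essentially the same route as the paper: in both parts one transports the defining inclusions across $\eta$, using the $\mathcal{C}$-hyperideal hypothesis (via Proposition 3.2 of \cite{das}) to move radicals, and in (ii) the condition $Ker(\eta)\subseteq P_1$ to pull $y_1\circ_2\cdots\circ_2 y_u\subseteq\eta(P_1)$ back to $x_1\circ_1\cdots\circ_1 x_u\subseteq P_1$. The only difference is cosmetic: the paper cites Proposition 2.8 of \cite{Sen} for preservation of the $\mathcal{C}$-property where you verify it directly, and you are in fact more explicit than the paper about the radical inclusions and the unit bookkeeping for the preimages in (ii).
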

\begin{proof}
(i) Assume that $P_2$ is a $(u,v)$-absorbing primary $\mathcal{C}$-hyperideal of $A_2$. Since $P_2$ is a $\mathcal{C}$-hyperideal of $A_2$, we conclude that $\eta^{-1}(P_2)$ is a $\mathcal{C}$-hyperideal of $A_2$ by Proposisition 2.8(ii) in \cite{Sen}. Now, let $ x_1 \circ_1 \cdots \circ_1 x_u \subseteq \eta^{-1}(P_2)$ for $x_1,\ldots,x_u \in A_1 \backslash U(A_1)$. Then we have $ \eta(x_1 \circ_1 \cdots \circ_1 x_u)=\eta(x_1) \circ_2 \cdots \circ_2 \eta(x_u) \subseteq P_2$ as $\eta$ is a good homomorphism. Since $P_2$ is a $(u,v)$-absorbing primary hyperideal of $A_2$ and $\eta(x_1),\ldots,\eta(a_u) \notin U(A_2)$, we get $\eta(x_1 \circ_1 \cdots \circ_1 x_v)=\eta(x_1) \circ_2 \cdots \circ_2 \eta(x_v) \subseteq P_2$ which means $x_1 \circ_1 \cdots \circ_1 x_v \subseteq \eta^{-1}(P_2)$ or $\eta(x_{v+1} \circ_1 \cdots \circ_1 x_u)=\eta(x_{v+1}) \circ_2 \cdots \circ_2 \eta(x_u) \subseteq rad(P_2)$ which implies $x_{v+1} \circ_1 \cdots \circ_1 x_u \subseteq \eta^{-1}(rad(P_2)) \subseteq rad(\eta^{-1}(P_2))$. Hence,  $\eta^{-1}(P_2)$ is a $(u,v)$-absorbing primary hyperideal of $A_1$.

(ii) Suppose that $P_1$ is  a $(u,v)$-absorbing primary $\mathcal{C}$-hyperideal of $A_1$ such that $Ker (\eta) \subseteq P_1$ and $\eta$ is surjective. Since $P_1$ is a $\mathcal{C}$-hyperideal of $A_1$, $\eta(P_1)$ is a $\mathcal{C}$-hyperideal of $A_1$ by Proposisition 2.8(i) in \cite{Sen}. Now, assume that $ y_1 \circ_2 \cdots \circ_2 y_u \subseteq \eta(P_1)$ for $y_1, \ldots, y_u \in A_2 \backslash U(A_2)$. Then there exists $x_i \in A_1 \backslash U(A_1)$ with $\eta(x_i)=y_i$ for every $i \in \{1,\cdots,u\}$ as $\eta$ is surjective. Hence, we have $\eta(x_1 \circ_1 \cdots \circ_1 x_u)=\eta(x_1) \circ_2 \cdots \circ_2 \eta(x_u)\subseteq \eta(P_1)$. Now, take any $p \in x_1 \circ_1 \cdots \circ_1 x_u$. Then we get $\eta(p) \in \eta(x_1 \circ_1 \cdots \circ_1 x_u) \subseteq \eta(P_1)$ and so there exists $q \in P_1$ such that $\eta(p)=\eta(q)$. Then we get $\eta(p-q)=0$ which means $p-q \in Ker (\eta)\subseteq P_1$ and so $p \in P_1$. Therefore we conclude that $x_1 \circ_1 \cdots \circ_1 x_u \subseteq P_1$ as $P_1$ is a $\mathcal{C}$-hyperideal. Since $P_1$ is a $(u,v)$-absorbing primary hyperideal of $A_1$, we obtain $x_1 \circ_1 \cdots \circ_1 x_v \subseteq P_1$ or $x_{v+1} \circ_1 \cdots \circ_1 x_u \subseteq rad(P_1)$. This implies that $y_1 \circ_2 \cdots \circ_2 y_v =\eta(x_1 \circ_1 \cdots \circ_1 x_v )\subseteq \eta(P_1)$ or $y_{v+1} \circ_2 \cdots \circ_2 y_u=\eta(x_{v+1} \circ_1 \cdots \circ_1 x_u )\subseteq \eta(rad(P_1))=rad(\eta(P_1))$. Thus, $\eta(P_1)$ is a $(u,v)$-absorbing primary hyperideal of $A_2$.
\end{proof}
Now, we have the following result.
\begin{corollary}
Let the hyperideal $P$ of $A$ be a subset of the $\mathcal{C}$-hyperideal $Q$ of $A$, $x+P \notin U(A/P)$ for all $x \in A \backslash U(A)$ and $u,v \in \mathbb{N}$ with $u>v$. Then $Q$ is a $(u,v)$-absorbing primary hyperideal of $A$ if and only if $Q/P$ is a $(u,v)$-absorbing primary hyperideal of $A/P$.
\end{corollary}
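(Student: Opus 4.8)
The plan is to realize the corollary as a direct application of Theorem \ref{homo} to the canonical projection $\pi\colon A \to A/P$ given by $\pi(a) = a+P$. The first step is to record the basic features of $\pi$: it is a surjective hyperring good homomorphism with $\mathrm{Ker}(\pi) = P$, and the standing hypothesis ``$x+P \notin U(A/P)$ for all $x \in A\backslash U(A)$'' is precisely the condition ``$\pi(x)\notin U(A/P)$ for each $x \in A\backslash U(A)$'' required in Theorem \ref{homo}. I would then note the two elementary identities $\pi(Q) = Q/P$ and $\pi^{-1}(Q/P) = Q$; the latter uses $P\subseteq Q$, since $a+P \in Q/P$ forces $a-q \in P\subseteq Q$ for some $q\in Q$ and hence $a\in Q$. (Similarly $Q$ is proper in $A$ if and only if $Q/P$ is proper in $A/P$, again because $P\subseteq Q$.)

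For the implication from $A$ to $A/P$, suppose $Q$ is a $(u,v)$-absorbing primary hyperideal of $A$. Since $Q$ is by hypothesis a $\mathcal{C}$-hyperideal, it is a $(u,v)$-absorbing primary $\mathcal{C}$-hyperideal of $A$, and we have $\mathrm{Ker}(\pi)=P\subseteq Q$ with $\pi$ surjective; thus Theorem \ref{homo}(ii) applies and gives that $\pi(Q)=Q/P$ is a $(u,v)$-absorbing primary $\mathcal{C}$-hyperideal of $A/P$, in particular a $(u,v)$-absorbing primary hyperideal of $A/P$.

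For the converse, start from a $(u,v)$-absorbing primary hyperideal $Q/P$ of $A/P$. Before invoking Theorem \ref{homo}(i) I would first upgrade $Q/P$ to a $\mathcal{C}$-hyperideal: since $Q$ is a $\mathcal{C}$-hyperideal of $A$, $\mathrm{Ker}(\pi)=P\subseteq Q$, and $\pi$ is surjective, Proposition 2.8(i) of \cite{Sen} yields that $\pi(Q)=Q/P$ is a $\mathcal{C}$-hyperideal of $A/P$ (this is exactly the step used inside the proof of Theorem \ref{homo}(ii)). Now Theorem \ref{homo}(i) applies with $P_2=Q/P$ and shows that $\pi^{-1}(Q/P)$ is a $(u,v)$-absorbing primary $\mathcal{C}$-hyperideal of $A$; since $\pi^{-1}(Q/P)=Q$, this completes the proof. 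I do not expect any genuine obstacle here: all of the substantive work is already contained in Theorem \ref{homo}, and what remains is to check that $\pi$ meets its hypotheses, the identification $\pi^{-1}(Q/P)=Q$, and the compatibility of properness; the only mild subtlety is remembering, in the converse direction, to first transport the $\mathcal{C}$-hyperideal property from $Q$ down to $Q/P$ so that Theorem \ref{homo}(i) is applicable.
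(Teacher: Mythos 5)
Your proposal is correct and follows exactly the paper's route: the paper's proof is a one-line invocation of Theorem \ref{homo} applied to the canonical good epimorphism $\eta\colon A \to A/P$, $a \mapsto a+P$. Your write-up simply makes explicit the verifications the paper leaves implicit (kernel, surjectivity, the identifications $\eta(Q)=Q/P$ and $\eta^{-1}(Q/P)=Q$, and transporting the $\mathcal{C}$-hyperideal property to $Q/P$ before using part (i)), all of which are sound.
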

\begin{proof}
Consider the good epimorphism $\eta :A \longrightarrow A/P$ defined by $\eta(a)=a+P$. Now,  the claim follows from Theorem \ref{homo}. 
\end{proof}
For any given multiplicative hyperring $A$,  $M_m(A)$ denotes the set of all hypermatrices of $A$. Let $I = (I_{ij})_{m \times m}, J = (J_{ij})_{m \times m} \in P^\star (M_m(A))$. Then $I \subseteq J$ if and only if $I_{ij} \subseteq J_{ij}$\cite{ameri}. 
\begin{theorem} \label{8} 
Let $u,v \in \mathbb{N}$ with $u >v$ and $P$ be a hyperideal of $A$. If $M_m(P)$ is a $(u,v)$-absorbing primary $\mathcal{C}$-hyperideal of $M_m(A)$, then $P$ is a $(u,v)$-absorbing primary $\mathcal{C}$-hyperideal of $A$. 
\end{theorem}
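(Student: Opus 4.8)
The plan is to realize $A$ as the upper-left corner of $M_m(A)$ and pull the hypothesis back along this corner embedding. If $m=1$ then $M_m(A)=A$, $M_m(P)=P$, and there is nothing to prove, so I assume $m\geq 2$. For $a\in A$ let $\overline a$ denote the hypermatrix with $a$ in the $(1,1)$ slot and $0$ in every other entry. First I would record three elementary facts about this assignment: (a) $\overline{a_1}\circ\cdots\circ\overline{a_k}=\{\overline c\ :\ c\in a_1\circ\cdots\circ a_k\}$, since in every slot other than $(1,1)$ the entries of such a product are finite sums of terms each having a factor $0$, hence equal to $0$; (b) $\overline a\in M_m(P)$ if and only if $a\in P$ (inspect the $(1,1)$ entry; the rest are $0\in P$), so in particular $M_m(P)$ proper forces $P$ proper; and (c) if $a\in A\backslash U(A)$ then $\overline a\in M_m(A)\backslash U(M_m(A))$, because for $m\geq 2$ every hypermatrix in $\overline a\circ Y$ has $0$ in its $(m,m)$ entry and so cannot be the identity $I_m$. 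Fact (c), together with the bookkeeping in (a), is really the only delicate point, and it rests on the standard identity $0\circ z=\{0\}$ in a multiplicative hyperring.

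Granting these, the $\mathcal{C}$-hyperideal claim is immediate: if $a_1\circ\cdots\circ a_n$ meets $P$, pick $c$ in the intersection; then $\overline c\in(\overline{a_1}\circ\cdots\circ\overline{a_n})\cap M_m(P)$, so the $\mathcal{C}$-property of $M_m(P)$ yields $\overline{a_1}\circ\cdots\circ\overline{a_n}\subseteq M_m(P)$, i.e. $a_1\circ\cdots\circ a_n\subseteq P$ by (b). For the $(u,v)$-absorbing primary property, take $x_1,\dots,x_u\in A\backslash U(A)$ with $x_1\circ\cdots\circ x_u\subseteq P$. By (c) each $\overline{x_i}$ is a nonunit of $M_m(A)$, and by (a) and (b) we have $\overline{x_1}\circ\cdots\circ\overline{x_u}\subseteq M_m(P)$. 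Since $M_m(P)$ is $(u,v)$-absorbing primary, either $\overline{x_1}\circ\cdots\circ\overline{x_v}\subseteq M_m(P)$, which gives $x_1\circ\cdots\circ x_v\subseteq P$ by (b), or $\overline{x_{v+1}}\circ\cdots\circ\overline{x_u}\subseteq rad(M_m(P))$. In the latter case, since $M_m(P)$ is a $\mathcal{C}$-hyperideal, Proposition 3.2 of \cite{das} identifies $rad(M_m(P))$ with $\{Z\ :\ Z^t\subseteq M_m(P)\text{ for some }t\in\mathbb{N}\}$; hence for each $c\in x_{v+1}\circ\cdots\circ x_u$ there is $t$ with $\{\overline d\ :\ d\in c^t\}=\overline c\circ\cdots\circ\overline c\subseteq M_m(P)$ ($t$ factors), so $c^t\subseteq P$ and $c\in rad(P)$. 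Thus $x_{v+1}\circ\cdots\circ x_u\subseteq rad(P)$, and $P$ is a $(u,v)$-absorbing primary $\mathcal{C}$-hyperideal of $A$.

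The main obstacle here is clerical rather than conceptual: one must make sure the corner matrices $\overline{x_i}$ are genuinely nonunits (this needs $m\geq 2$ and $0\circ z=\{0\}$), and that $rad(M_m(P))$ is pinned down via the $\mathcal{C}$-hyperideal radical formula rather than left as an abstract intersection of prime hyperideals. An alternative that avoids handling the radical of $M_m(P)$ by hand is to observe that $\eta\colon A\to M_m(A)$, $\eta(a)=\overline a$, is a hyperring good homomorphism with $\eta^{-1}(M_m(P))=P$ and $\eta(A\backslash U(A))\subseteq M_m(A)\backslash U(M_m(A))$, and then to invoke Theorem \ref{homo}(i) directly.
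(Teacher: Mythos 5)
Your proposal is correct and follows essentially the same route as the paper: embed $A$ as the $(1,1)$ corner of $M_m(A)$, pull the $\mathcal{C}$-property and the $(u,v)$-absorbing primary condition back along this embedding, and descend from $rad(M_m(P))$ to $rad(P)$ via the power characterization of the radical of a $\mathcal{C}$-hyperideal. Your facts (b) and (c) — in particular the check that $\overline{a}$ is a nonunit of $M_m(A)$ when $a$ is a nonunit of $A$ — are details the paper uses implicitly without verification, so your write-up is if anything slightly more complete, and the suggested shortcut through Theorem \ref{homo}(i) is a legitimate alternative packaging of the same argument.
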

\begin{proof}
Let $x_1 \circ \cdots \circ x_n \cap P \neq \varnothing$ for $x_1,\ldots,x_n \in A$. Then we have 
\[\begin{pmatrix}
x_1 \circ \cdots \circ x_n & 0 & \cdots & 0 \\
0 & 0 & \cdots & 0 \\
\vdots & \vdots & \ddots \vdots \\
0 & 0 & \cdots & 0 
\end{pmatrix}
\cap M_m(P) \neq \varnothing\]
which means 
\[(\begin{pmatrix}
x_1 & 0 & \cdots & 0\\
0 & 0 & \cdots & 0\\
\vdots & \vdots & \ddots \vdots\\
0 & 0 & \cdots & 0
\end{pmatrix}
\circ \cdots \circ
\begin{pmatrix}
x_n & 0 & \cdots & 0\\
0 & 0 & \cdots & 0\\
\vdots & \vdots & \ddots \vdots\\
0 & 0 & \cdots & 0
\end{pmatrix}) \cap M_m(P) \neq \varnothing.
\]
Since $M_m(P)$ is a $\mathcal{C}$-hyperideal of $M_m(A)$, we obtain \[\begin{pmatrix}
x_1 & 0 & \cdots & 0\\
0 & 0 & \cdots & 0\\
\vdots & \vdots & \ddots \vdots\\
0 & 0 & \cdots & 0
\end{pmatrix}
\circ \cdots \circ
\begin{pmatrix}
x_n & 0 & \cdots & 0\\
0 & 0 & \cdots & 0\\
\vdots & \vdots & \ddots \vdots\\
0 & 0 & \cdots & 0
\end{pmatrix} \subseteq M_m(P).
\]
and so 
\[\begin{pmatrix}
x_1 \circ \cdots \circ x_n & 0 & \cdots & 0 \\
0 & 0 & \cdots & 0 \\
\vdots & \vdots & \ddots \vdots \\
0 & 0 & \cdots & 0 
\end{pmatrix}
\subseteq  M_m(P).\]
This means $x_1 \circ \cdots \circ x_n \subseteq P$. Hence $P$ is a $\mathcal{C}$-hyperideal of $A$.
Now, let $x_1 \circ \cdots \circ x_u \subseteq P$ for $x_1, \ldots , x_u \in A \backslash U(A)$. Then we obtain
\[\begin{pmatrix}
x_1 \circ \cdots \circ x_u & 0 & \cdots & 0 \\
0 & 0 & \cdots & 0 \\
\vdots & \vdots & \ddots \vdots \\
0 & 0 & \cdots & 0 
\end{pmatrix}
\subseteq M_m(P).\]
Since $M_m(P)$ is a $(u,v)$-absorbing primary hyperideal of $M_m(A)$ and 
\[ \begin{pmatrix}
x_1 \circ \cdots \circ x_u & 0 & \cdots & 0\\
0 & 0 & \cdots & 0\\
\vdots & \vdots & \ddots \vdots\\
0 & 0 & \cdots & 0
\end{pmatrix}
=
\begin{pmatrix}
x_1 & 0 & \cdots & 0\\
0 & 0 & \cdots & 0\\
\vdots & \vdots & \ddots \vdots\\
0 & 0 & \cdots & 0
\end{pmatrix}
\circ \cdots \circ
\begin{pmatrix}
x_u & 0 & \cdots & 0\\
0 & 0 & \cdots & 0\\
\vdots & \vdots & \ddots \vdots\\
0 & 0 & \cdots & 0
\end{pmatrix}
\]
we conclude that 
\[ \begin{pmatrix}
x_1 & 0 & \cdots & 0\\
0 & 0 & \cdots & 0\\
\vdots& \vdots & \ddots \vdots\\
0 & 0 & \cdots & 0
\end{pmatrix} 
\circ \cdots \circ
\begin{pmatrix}
x_v & 0 & \cdots & 0\\
0 & 0 & \cdots & 0\\
\vdots& \vdots & \ddots \vdots\\
0 & 0 & \cdots & 0
\end{pmatrix}\]
\[=
\begin{pmatrix}
x_1 \circ \cdots \circ x_v & 0 & \cdots & 0\\
0 & 0 & \cdots & 0\\
\vdots& \vdots & \ddots \vdots\\
0 & 0 & \cdots & 0
\end{pmatrix}
\subseteq M_m(P)\]\\
or 
\[ \begin{pmatrix}
x_{v+1} & 0 & \cdots & 0\\
0 & 0 & \cdots & 0\\
\vdots& \vdots & \ddots \vdots\\
0 & 0 & \cdots & 0
\end{pmatrix} 
\circ \cdots \circ
\begin{pmatrix}
x_u & 0 & \cdots & 0\\
0 & 0 & \cdots & 0\\
\vdots& \vdots & \ddots \vdots\\
0 & 0 & \cdots & 0
\end{pmatrix}\]
\[=
\begin{pmatrix}
x_{v+1} \circ \cdots \circ x_u & 0 & \cdots & 0\\
0 & 0 & \cdots & 0\\
\vdots& \vdots & \ddots \vdots\\
0 & 0 & \cdots & 0
\end{pmatrix}
\subseteq rad(M_m(P)).\]
In the first possibility, we get $x_1 \circ \cdots \circ x_v \subseteq P$. 
In the second possibility, there exists $n \in \mathbb{N}$ such that 
\[\begin{pmatrix}
(x_{v+1} \circ \cdots \circ x_u)^n & 0 & \cdots & 0\\
0 & 0 & \cdots & 0\\
\vdots& \vdots & \ddots \vdots\\
0 & 0 & \cdots & 0
\end{pmatrix}=
\begin{pmatrix}
x_{v+1} \circ \cdots \circ x_u & 0 & \cdots & 0\\
0 & 0 & \cdots & 0\\
\vdots& \vdots & \ddots \vdots\\
0 & 0 & \cdots & 0
\end{pmatrix}^n \subseteq M_n(P)\]

which implies $(x_{v+1} \circ \cdots \circ x_u)^n  \subseteq P$ and so $x_{v+1} \circ \cdots \circ x_u \subseteq rad(P)$. Thus we conclude that  $P$ is a $(u,v)$-absorbing primary hyperideal of $A$. 
\end{proof}
A non-empty subset $S$ of $A$ containing $1$  refers to a multiplicative closed subset (briefly, MCS) if $S$ is closed under the hypermultiplication  \cite{ameri}. Consider the set $(A \times S / \sim)$ of equivalence classes denoted by $S^{-1}A$ such that $(x,r) \sim (y,s)$ if and only if there exists $ t \in S $ with $ t \circ r \circ y=t \circ s \circ x$.
The equivalence class of $(x,r) \in A \times S$ is denoted by $\frac{x}{r}$. The triple $(S^{-1}A, \oplus, \odot)$ is a commutative multiplicative hyperring where

$\hspace{1cm}\frac{x }{r } \oplus \frac{y}{s}=\frac{r \circ y+s \circ x}{r  \circ s}=\{\frac{a+b}{c} \ \vert \ a \in r  \circ y , b \in s \circ x , c \in r \circ s\}$

$\hspace{1cm}\frac{x }{r } \odot \frac{y}{s}=\frac{x  \circ y}{r  \circ s}=\{\frac{a}{b} \ \vert \ a \in x \circ y, b \in r  \circ s\}$

The localization map $\pi: A \longrightarrow S^{-1}A$, defined by $a \mapsto \frac{a}{1}$, is a homomorphism of hyperrings. Furthermore, if $I$ is a hyperideal of $A$, then $S^{-1}P$ is a hyperideal of $S^{-1}P$ \cite{Mena}. Next, we discuss the relationship between $(u, v)$-absorbing primary hyperideals and their localizations.
\begin{theorem} \label{vvv}
Assume that $P$ is a $\mathcal{C}$-hyperideal of $A$, $S$ a MCS such that $P \cap S = \varnothing$ and $u,v \in \mathbb{N}$ with $u >v$. If $P$ is a $(u, v)$-absorbing primary hyperideal of $A$, then $S^{-1}P$ is a $(u-1, v-1 )$-absorbing primary hyperideal of $S^{-1}A$. 
\end{theorem}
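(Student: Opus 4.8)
The plan is to imitate the classical localization argument for absorbing-type ideals: clear denominators to transport the hypothesis into $A$, invoke the $(u,v)$-absorbing primary property of $P$ there, and carry the conclusion back up. First I would record two preliminary facts. Since $P\cap S=\varnothing$, the hyperideal $S^{-1}P$ of $S^{-1}A$ is proper: if $\frac11\in S^{-1}P$, say $\frac11=\frac pt$ with $p\in P$ and $t\in S$, then the defining relation gives $q\in S$ with $q\circ t\circ 1=q\circ 1\circ p\subseteq P$, so any element of $q\circ t\subseteq S$ (which lies in its own product with $1$) belongs to $P$, contradicting $P\cap S=\varnothing$. Secondly, $rad(S^{-1}P)=S^{-1}(rad(P))$; this is the hyperring counterpart of the commutation of radical with localization, and I would either cite it or check it directly, using that $rad(P)$ is again a $\mathcal{C}$-hyperideal of $A$ (Proposition 3.3 of \cite{das}).

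Now take $\frac{x_1}{s_1}\odot\cdots\odot\frac{x_{u-1}}{s_{u-1}}\subseteq S^{-1}P$ with each $\frac{x_i}{s_i}\in S^{-1}A\backslash U(S^{-1}A)$. Since $\frac1{s_i}$ is a unit of $S^{-1}A$, no $x_i$ is a unit of $A$, for otherwise $\frac{x_i}{s_i}$ would be a unit. Choose $w\in x_1\circ\cdots\circ x_{u-1}$ and $\sigma\in s_1\circ\cdots\circ s_{u-1}$; then $\frac w\sigma\in S^{-1}P$, so $\frac w\sigma=\frac pt$ with $p\in P$, $t\in S$, and the defining relation yields $r\in S$ with $r\circ t\circ w=r\circ\sigma\circ p\subseteq P$. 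Taking $s$ to be any element of $r\circ t\subseteq S$ we obtain $s\circ w\subseteq P$ with $w\in x_1\circ\cdots\circ x_{u-1}$; hence the $u$-fold hyperproduct $s\circ x_1\circ\cdots\circ x_{u-1}$ meets $P$, and as $P$ is a $\mathcal{C}$-hyperideal this forces $s\circ x_1\circ\cdots\circ x_{u-1}\subseteq P$.

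Next I would apply the $(u,v)$-absorbing primary property to $s\circ x_1\circ\cdots\circ x_{u-1}\subseteq P$, grouping $s,x_1,\ldots,x_{v-1}$ as the first $v$ factors and $x_v,\ldots,x_{u-1}$ as the last $u-v$ factors. (If the chosen $s$ happens to be a unit of $A$, one first deduces $x_1\circ\cdots\circ x_{u-1}\subseteq P$ and then re-multiplies by a nonunit of $S$, or simply absorbs $s$; this is the one bookkeeping point that needs a word of care.) Two cases arise. If $x_v\circ\cdots\circ x_{u-1}\subseteq rad(P)$, then $\frac{x_v}{s_v}\odot\cdots\odot\frac{x_{u-1}}{s_{u-1}}\subseteq S^{-1}(rad(P))=rad(S^{-1}P)$, the second alternative in the definition of a $(u-1,v-1)$-absorbing primary hyperideal. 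If instead $s\circ x_1\circ\cdots\circ x_{v-1}\subseteq P$, then $\frac s1\odot\frac{x_1}1\odot\cdots\odot\frac{x_{v-1}}1\subseteq S^{-1}P$; since $s\in S$, the element $\frac s1$ is a unit of $S^{-1}A$, so after cancelling it and absorbing the unit factors $\frac1{s_i}$ into the hyperideal $S^{-1}P$ we get $\frac{x_1}{s_1}\odot\cdots\odot\frac{x_{v-1}}{s_{v-1}}\subseteq S^{-1}P$, the first alternative. In either case the definition is satisfied, so $S^{-1}P$ is a $(u-1,v-1)$-absorbing primary hyperideal of $S^{-1}A$.

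The heart of the argument, besides the routine but tedious manipulations with the equivalence relation defining $S^{-1}A$, is the denominator-clearing step together with the observation that $\frac s1$ is a unit of $S^{-1}A$ even when $s$ fails to be a unit of $A$; this is precisely what makes the parameters drop from $(u,v)$ to $(u-1,v-1)$ rather than stay at $(u,v)$, since the element $s$ produced on clearing denominators consumes one slot of the absorbing condition on $P$.
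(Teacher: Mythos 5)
Your proposal is correct and follows essentially the same route as the paper's proof: clear denominators via the defining equivalence relation, use the $\mathcal{C}$-hyperideal hypothesis to upgrade the nonempty intersection to a containment $s\circ x_1\circ\cdots\circ x_{u-1}\subseteq P$, apply the $(u,v)$-absorbing primary property with the auxiliary element of $S$ occupying one of the first $v$ slots, and transfer each alternative back to $S^{-1}A$ (the paper merely phrases the case split contrapositively, which is immaterial). The one bookkeeping point you flag --- that the element cleared from the denominators might be a unit of $A$, so the definition cannot be applied verbatim with it as a factor --- is left equally unaddressed in the paper's own proof, so your attempt is no less complete than the original.
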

\begin{proof}
Suppose that  $\frac{x_1}{r_1} \odot \cdots \odot \frac{x_{u-1}
}{r_{u-1}}=\frac{x_1 \circ \cdots \circ x_{u-1}}{r_1 \circ \cdots \circ r_{u-1}} \subseteq S^{-1}P$ for $r_1,\ldots,r_{u-1} \in S$ and  $x_1, \ldots , x_{u-1} \in A \backslash U(A)$ but $\frac{x_1}{r_1} \odot \cdots \odot \frac{x_{v-1}}{r_{v-1}} \nsubseteq S^{-1}Q$. Take any $r \in r_1 \circ \cdots \circ r_{u-1}$ and $x \in x_1 \circ \cdots \circ x_{u-1}$. Therefore $\frac{x}{r} \in \frac{x_1 \circ \cdots \circ x_{u-1}}{r_1 \circ \cdots \circ r_{u-1}}$ and so $\frac{x}{r}=\frac{x^{\prime}}{r^{\prime}}$ for some $r^{\prime} \in S$ and $x^{\prime} \in P$. Then, we conclude that  $t \circ x \circ r^{\prime}=t \circ x^{\prime} \circ r$ for some $t \in S$. This implies that $t \circ x \circ r^{\prime} \subseteq P$. Since $x \in x_1 \circ \cdots \circ x_{u-1}$, we obtain $t \circ x \circ r^{\prime} \subseteq t \circ x_1 \circ \cdots \circ x_{u-1} \circ r^{\prime}$. Since $P$ is a $\mathcal{C}$-hyperideal of $A$ and $t \circ x_1 \circ \cdots \circ x_{u-1} \circ r^{\prime} \cap P \neq \varnothing$, we get  $t \circ x_1 \circ \cdots \circ x_{u-1}\circ r^{\prime} \subseteq P$.  Take any $w \in t \circ r^{\prime}$. If $w \circ x_1 \circ \cdots \circ x_{v-1} \subseteq P$, then we get $\frac{x_1}{r_1} \odot \cdots \odot \frac{x_{v-1}}{r_{v-1}}=\frac{x_1 \circ \cdots \circ x_{v-1}}{r_1 \circ \cdots \circ r_{v-1}}= \frac{w \circ x_1 \circ \cdots \circ x_{v-1}}{w \circ r_1 \circ \cdots \circ r_{v-1}}\subseteq S^{-1}P$, a contradiction.  Since $P$ is a $(u, v)$-absorbing primary hyperideal of $A$, $w \circ x_1 \circ \cdots \circ x_{u-1} \subseteq P$ and $w \circ x_1 \circ \cdots \circ x_{v-1} \nsubseteq P$, we have $ x_v \circ \cdots \circ x_{u-1} \subseteq rad(P)$ which implies $\frac{x_v}{r_v} \odot \cdots \odot \frac{x_{u-1}}{r_{u-1}}=\frac{x_v \circ \cdots \circ x_{u-1}}{r_v \circ \cdots \circ r_{u-1}} \subseteq S^{-1}(rad(P))=rad(S^{-1}P)$. Consequently,  $S^{-1}P$ is a $(u-1,v-1)$-absorbing primary hyperideal of $S^{-1}A$. 
\end{proof}
\begin{theorem}
Let $P$ be a $\mathcal{C}$-hyperideal of $A$, $S$ be a MCS with $\Gamma \cap S = \varnothing$ where $\Gamma=\{a \in A \ \vert \ a \circ b \subseteq P \  \text{for some} \ b \in A \backslash P\}$ and $u,v \in \mathbb{N}$ with $u >v$. If $S^{-1}P$ is a $(u, v)$-absorbing primary hyperideal of $A$, then $P$ is a $(u,v )$-absorbing primary hyperideal of $ A$. 
\end{theorem}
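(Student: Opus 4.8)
The plan is to pass to the localization $S^{-1}A$, exploit the hypothesis $\Gamma\cap S=\varnothing$ through a cancellation lemma, and use the $\mathcal{C}$-hyperideal property of $P$ to pull the radical condition back from $S^{-1}A$ to $A$. Concretely, suppose $x_1\circ\cdots\circ x_u\subseteq P$ with $x_1,\ldots,x_u\in A\setminus U(A)$, and assume $x_1\circ\cdots\circ x_v\nsubseteq P$; we must show $x_{v+1}\circ\cdots\circ x_u\subseteq rad(P)$. Two elementary facts will be used repeatedly. First, the \emph{cancellation lemma}: if $s\in S$ and $Y\subseteq A$ satisfy $s\circ Y\subseteq P$, then $Y\subseteq P$ — for if some $c\in Y$ had $c\notin P$, then $s\circ c\subseteq P$ would force $s\in\Gamma\cap S=\varnothing$. (In particular $P\cap S=\varnothing$, so $S^{-1}P$ is proper and $P\neq A$.) Second, since $P$ is a $\mathcal{C}$-hyperideal we have $rad(P)=\{a\in A:a^n\subseteq P\text{ for some }n\in\mathbb{N}\}$ (Proposition 3.2 in \cite{das}), and a finite hyperproduct meeting $P$ is contained in $P$.

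Now push everything into $S^{-1}A$: $\frac{x_1}{1}\odot\cdots\odot\frac{x_u}{1}=\frac{x_1\circ\cdots\circ x_u}{1^u}\subseteq S^{-1}P$. Granting that the images $\frac{x_i}{1}$ are nonunits of $S^{-1}A$ (see below), the $(u,v)$-absorbing primary property of $S^{-1}P$ gives either $\frac{x_1}{1}\odot\cdots\odot\frac{x_v}{1}\subseteq S^{-1}P$ or $\frac{x_{v+1}}{1}\odot\cdots\odot\frac{x_u}{1}\subseteq rad(S^{-1}P)=S^{-1}(rad(P))$. In the first case, pick any $c\in x_1\circ\cdots\circ x_v$; then $\frac c1\in S^{-1}P$, so $\frac c1=\frac{p}{s'}$ with $p\in P$, $s'\in S$, hence $t\circ s'\circ c=t\circ1\circ p\subseteq P$ for some $t\in S$, and choosing $w\in t\circ s'\subseteq S$ we get $w\circ c\subseteq P$, so $c\in P$ by the cancellation lemma; thus $x_1\circ\cdots\circ x_v\subseteq P$, a contradiction. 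In the second case, pick any $y\in x_{v+1}\circ\cdots\circ x_u$; then $\frac y1=\frac z{s'}$ with $z\in rad(P)$ and $s'\in S$, so $t\circ s'\circ y=t\circ1\circ z$ for some $t\in S$. Choosing $n$ with $z^n\subseteq P$ and raising to the $n$-th power, $t^n\circ s'^n\circ y^n=t^n\circ1^n\circ z^n\subseteq P$; picking $w\in t^n\circ s'^n\subseteq S$ gives $w\circ y^n\subseteq P$, whence $y^n\subseteq P$ by the cancellation lemma, i.e. $y\in rad(P)$. Since $y$ was arbitrary, $x_{v+1}\circ\cdots\circ x_u\subseteq rad(P)$, as required.

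The step I expect to be the main obstacle is the parenthetical claim that the images $\frac{x_i}{1}$ are nonunits of $S^{-1}A$: a nonunit of $A$ need not remain a nonunit after localizing. If $\frac{x_i}{1}\in U(S^{-1}A)$, unwinding the defining relation $\frac11\in\frac{x_i}{1}\odot\frac ys$ produces $\sigma\in S$ and $z\in A$ with $\sigma\in x_i\circ z$; multiplying $x_1\circ\cdots\circ x_u\subseteq P$ by $z$ and applying the cancellation lemma to $\sigma$ then deletes $x_i$, yielding $\bigcirc_{k\neq i}x_k\subseteq P$. Iterating removes every index $i$ with $\frac{x_i}{1}\in U(S^{-1}A)$ (not all indices can be removed, since otherwise an element of $S$ would land in $P$, contradicting $P\cap S=\varnothing$); one then argues that, after such deletions, either $x_1\circ\cdots\circ x_v$ is already forced into $P$ (contradiction), or a tail factor is absorbed into $rad(P)$ — here using that $rad(P)$ is a hyperideal, so that reinstating the deleted tail factors keeps the product inside $rad(P)$ — or one is reduced to the all-nonunit situation treated above, with the short cases (for instance $v=1$, or the case where only tail indices are deleted) disposed of directly. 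This unit bookkeeping, rather than any single hard estimate, is the delicate part of the write-up; everything else is a routine transfer through the localization.
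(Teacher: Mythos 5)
Your argument follows essentially the same route as the paper's: push the product into $S^{-1}A$ via $x\mapsto\frac{x}{1}$, apply the $(u,v)$-absorbing primary hypothesis there, and pull each of the two alternatives back to $A$ using the cancellation afforded by $\Gamma\cap S=\varnothing$ together with the $\mathcal{C}$-hyperideal property of $P$ (and of its radical), so the case analysis matches the paper's almost step for step. The one point where you go beyond the paper --- checking that the images $\frac{x_i}{1}$ are nonunits of $S^{-1}A$ before invoking the hypothesis --- addresses a condition the paper's own proof applies silently, so your (admittedly only sketched) deletion argument is added care rather than a deviation in method.
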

\begin{proof}
Assume that $x_1 \circ \cdots \circ x_u \subseteq P$ for $x_1,\ldots,x_u \in A \backslash U(A)$. Then we have $\frac{x_1 \circ \cdots \circ x_u}{1 \circ \cdots \circ 1}= \frac{x_1}{1}\odot \cdots \odot \frac{x_u}{1}\subseteq S^{-1}P$. Since $S^{-1}P$ is a $(u, v)$-absorbing primary hyperideal of $A$, we conclude that $\frac{x_1 \circ \cdots \circ x_v}{1 \circ \cdots \circ 1}=\frac{x_1}{1}\odot \cdots \odot \frac{x_v}{1}  \subseteq S^{-1}P$ or $\frac{x_{v+1} \circ \cdots \circ x_u}{1 \circ \cdots \circ 1}=\frac{x_{v+1}}{1}\odot \cdots \odot \frac{x_u}{1}  \subseteq rad(S^{-1}P)=S^{-1}rad(P)$. In the first case, we have $\frac{a}{1} \in S^{-1}P$ for some $a \in x_1 \circ \cdots \circ x_v$. Therefore there exists $p \in P$ and $t \in S$ such that $\frac{a}{1}=\frac{p}{t}$ and so $s \circ t \circ a=s \circ p \circ 1$ for some $s \in S$ which means $s \circ t \circ a \subseteq P$. Take any $r \in s \circ t$.  Since $\Gamma \cap S=\varnothing$ and $r \circ a \subseteq P$, we have $a \in P$. Since $P$ is a $\mathcal{C}$-hyperideal of $A$ and $(x_1 \circ \cdots \circ x_v) \cap P \neq \varnothing$, we obtain $x_1 \circ \cdots \circ x_v \subseteq  P$. In the second case, we get $\frac{b}{1} \in S^{-1}rad(P)$ for some $b \in x_{v+1} \circ \cdots \circ x_u$. Therefore there exists $q \in rad(P)$ and $t \in S$ such that $\frac{b}{1}=\frac{q}{t}$ and so $s \circ t \circ b=s \circ q \circ 1$ for some $s \in S$ which means $s \circ t \circ b \subseteq rad(P)$. Assume that $c \in r \circ b$ such that $r \in s \circ t$. Then there exists $n \in \mathbb{N}$ such that $c^n \subseteq P$. since $P$ is a $\mathcal{C}$-hyperideal of $A$ and $ (r \circ x_{v+1} \circ \cdots \circ x_u)^n \cap P \neq \varnothing$, we get $(r \circ x_{v+1} \circ \cdots \circ x_u)^n \subseteq P$. Let $\alpha \in r^n$ and $\beta \in  (x_{v+1} \circ \cdots \circ x_u)^n$. Since $\Gamma \cap S=\varnothing$, $\alpha \circ \beta \subseteq P$ and $\alpha \notin \Gamma$, we have $\beta \in P$ and so $(x_{v+1} \circ \cdots \circ x_u)^n \subseteq P$ which means $x_{v+1} \circ \cdots \circ x_u \subseteq rad(P)$. Thus, $P$ is a $(u,v )$-absorbing primary hyperideal of $ A$.
\end{proof}

\section{conclusion}
In this paper, we introduced and investigated  an expansion of 1-absorbing primary hyperideals in multiplicative hyperrings called $(u,v)$-absorbing primary  hyperideals  where $u,v \in \mathbb{Z}$ with $u>v$. We gave  several specific results explaining this new structure.  We indicated that the  concepts of $(u,v)$-absorbing prime  hyperideals and $(u,v)$-absorbing primary  hyperideals are different, although every $(u,v)$-absorbing prime  hyperideal is a $(u,v)$-absorbing primary  hyperideal. We analyzed when a product of two hyperideals is a $(u,v)$-absorbing primary hyperideal. Furthermore, it was examined when $(P:x)$ is a $(u,v)$-absorbing primary hyperideal. We presented a condition by which the intersection of some $(u,v)$-absorbing primary hyperideals is a $(u,v)$-absorbing primary hyperideal and then gave an example showing this condition is crucial. We concluded that a $\mathcal{C}$-hyperideal in a Dedekind hyperdomain is a $(u,v)$-absorbing primary hyperideal if and
only if it$^,$s radical is a prime hyperideal.
 Also, we investigated when    a $\mathcal{C}$-hyperideal in a divided multiplicative hyperring $A$ is a $(u,v)$-absorbing primary hyperideal. Moreover, we studied the stability of   $(u,v)$-prime hyperideals with respect to localization. The study can be continued for other classes of hyperstructures.
\section{Future work}
In \cite{Akray}, Akray and Anjuman proposed the notion of $v$-absorbing $I$-primary hyperideals in a multiplicative hyperring. As future work, we will study the concept $(u,v)$-absorbing $I$-primary  hyperideal as a generalization of $(u,v)$-absorbing primary  hyperideals. 
\begin{definition}
Let $P$ be a proper hyperideal of $A$ and $u,v \in \mathbb{N}$ with $u >v$. For fixed proper hyperideal $I$ of $A$, we say that $P$ is a  $(u,v)$-absorbing $I$-primary  hyperideal  if     $x_1 \circ \cdots \circ x_u \subseteq P \backslash IP$ for $x_1,\ldots, x_u \in A \backslash U(A)$, then either $x_1 \circ \cdots \circ x_v \subseteq P$ or $x_{v+1} \circ \cdots \circ x_u \subseteq rad(P)$.
\end{definition}


\end{document}